\theoremstyle{plain}
\newtheorem{theorem}{\bf Theorem}[section]
\newtheorem{cor}[theorem]{Corollary}
\newtheorem{lemma}[theorem]{Lemma}
\newtheorem{proposition}[theorem]{Proposition}
\newtheorem{thmnonumber}{\bf Theorem}
\theoremstyle{definition}
\DeclareMathAlphabet{\mathpzc}{OT1}{pzc}{m}{it}
\newcommand{\TT}{\mathrm{T}}
\newcommand{\rint}{\mathrm{relint}}
\newcommand{\RN}{\mathrm{N}}
\newcommand{\e}{\varepsilon}
\newcommand{\R}{\mathbb{R}}
\newcommand{\RS}{\mathrm{R}\,}
\newcommand{\Lk}{\mathrm{Lk}\, }
\newcommand{\St}{\mathrm{St}\, }
\newcommand{\sd}{\mathrm{sd}\, }
\newcommand{\cm}[1]{}
\begin{document}

\author{
Karim Adiprasito\\
\small Einstein Institute for Mathematics\\ \small Hebrew University of Jerusalem\\
\small 91904 Jerusalem, Israel\\
\small \url{adiprasito@math.huji.ac.il}
\and
\and 
Bruno Benedetti \\
\small Department of Mathematics\\ \small University of Miami\\
\small Coral Gables, FL 33146, USA\\
\small \url{bruno@math.miami.edu}}

\date{\small May 9, 2018}
\title{A Cheeger-type exponential bound for the number of triangulated manifolds}
\maketitle
\bfseries

\mdseries


\begin{abstract}
In terms of the number of triangles, it is known that there are more than exponentially many triangulations of surfaces, but only exponentially many triangulations of surfaces with bounded genus. In this paper we provide a first geometric extension of this result to higher dimensions. We show that in terms of the number of facets, there are only exponentially many geometric triangulations of space forms with bounded geometry in the sense of Cheeger (curvature and volume bounded below, and diameter bounded above). This establishes a combinatorial version of Cheeger's finiteness theorem. 

\noindent Further consequences of our work are:

(1) There are exponentially many geometric triangulations of $S^d$. 

(2) There are exponentially many convex triangulations of the $d$-ball.
\end{abstract}

\section{Introduction}
In discrete quantum gravity, one simulates Riemannian structures by considering all possible triangulations of manifolds~\cite{Regge, ADJ, Weingarten}. The metric is introduced a posteriori, by assigning to each edge a certain length (as long as all triangular inequalities are satisfied). For example, in Weingarten's \emph{dynamical triangulations} model,  we simply assign to all edges length $1$, and view all triangles as equilateral triangles in the plane~\cite{ADJ, Weingarten}. The resulting intrinsic metric is  sometimes called ``equilateral flat metric'', cf.~\cite{AB-part1}.

This model gained popularity due to its simplification power. For example, the \emph{partition function} for quantum gravity, a path integral over all Riemannian metrics, becomes a sum over all possible triangulations with $N$ facets \cite{Weingarten}. To make sure that this sum converges when $N$ tends to infinity, one needs to establish an exponential bound for the number of triangulated $d$-manifolds with $N$ facets; compare Durhuus--Jonsson~\cite{DJ}. However, already for $d=2$, this dream is simply impossible: It is known that here are more than exponentially many surfaces with $N$ triangles.  
For $d=2$ the problem can be bypassed by restricting the topology, because for fixed $g$ there are only exponentially many triangulations of the genus-$g$ surface, as explained in~\cite{ADJ, Tutte}. 

In dimension greater than two, however, it is not clear which geometric tools to use to provide exponential cutoffs for the class of triangulations with $N$ simplices.  Are there only exponentially many triangulations of $S^3$, or more? This open problem, first asked in \cite{ADJ1}, was later put into the spotlight also by Gromov~\cite[pp.~156--157]{GromovQuestion}.
Part of the difficulty is that when $d \ge 3$ many $d$-spheres cannot be realized as boundaries of $(d+1)$-polytopes \cite{Kalai, PZ}, and cannot even be shelled \cite{HZ}. In fact, we know that shellable spheres are only exponentially many~\cite{BZ}.

We tackle the problem from a new perspective.  Cheeger's finiteness theorem states that there are only finitely many diffeomorphism types of space forms with ``bounded geometry'': curvature and volume bounded below, and diameter bounded above. What we achieve is a discrete analogue of Cheeger's theorem, which (roughly speaking) shows that \emph{geometric triangulations} of manifolds with bounded geometry are very few. 

\begin{thmnonumber}[Theorem~\ref{thm:DiscreteCheeger}]\label{mainthm:cheeger} 
In terms of the number of facets, there are exponentially many geometric triangulations of space forms with bounded geometry (and fixed dimension).
\end{thmnonumber}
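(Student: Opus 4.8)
The plan is to bound the number of geometric triangulations by a volume/packing argument, translating the Cheeger-type hypotheses (curvature bounded below, volume bounded below, diameter bounded above) into a uniform control on the \emph{combinatorics} of each triangulation. The first step is to show that the bounded-geometry hypotheses force a uniform upper bound on the number of facets meeting at any vertex, and more generally a uniform bound on the vertex degrees of the dual graph. The key point is that a geometric (say, equilateral flat) $d$-simplex has a definite solid angle at each of its vertices; a lower curvature bound limits how many such simplices can crowd around a single vertex or lower-dimensional face, since total angle excess is controlled by curvature. Combined with a lower volume bound per simplex, this yields that the facet-dual graph $\Gamma$ has maximum degree bounded by an absolute constant $D = D(d)$ depending only on the dimension and the geometric bounds.

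With a uniform degree bound in hand, the second step is to reduce counting triangulations to counting objects that admit an explicit exponential bound. I would fix the number of facets to be $N$ and argue that such a triangulation is determined, up to combinatorial isomorphism, by a labeled spanning structure on $\Gamma$ together with gluing data. Concretely: pick a spanning tree of the dual graph $\Gamma$, which encodes a facet-by-facet unfolding of the manifold; because each facet is a fixed geometric simplex and glued to at most $d+1$ neighbors, and because the degree is bounded by $D$, the number of ways to extend the unfolding across each tree edge is bounded by a constant. The remaining (non-tree) dual edges number at most $\tfrac{1}{2}(d+1)N$, and each records an identification between two ridges, specifiable in boundedly many ways. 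Thus the total count is at most $C^{N}$ for a constant $C = C(d, \text{bounds})$, which is the desired exponential bound. This is essentially the same spanning-tree encoding that underlies the classical count of shellable or locally-constructible manifolds, adapted here to the geometric setting.

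The third step, if needed, is to upgrade the combinatorial isomorphism count to an actual count of geometric triangulations, i.e. to check that the edge lengths (or more generally the metric data) are themselves determined up to boundedly many choices by the combinatorial type together with the bounded-geometry constraints. For the equilateral flat metric this is immediate, since all edge lengths equal $1$; in the general bounded-geometry case one uses that curvature and volume bounds confine each simplex's shape to a compact family, which one discretizes at the cost of another constant factor absorbed into $C$.

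The step I expect to be the main obstacle is the first one: extracting a clean, dimension-uniform degree bound on the dual graph from the Riemannian Cheeger hypotheses. The subtlety is that "bounded geometry" is a statement about the smooth space form, while the degree bound is a statement about the combinatorics of its geometric triangulation, and bridging these requires relating the intrinsic curvature of the triangulated (piecewise-flat) manifold to the smooth curvature bound — controlling the deficit angles concentrated along codimension-two faces and ensuring they cannot accumulate. Making this comparison quantitative, and ensuring the constant genuinely depends only on the dimension and the a priori bounds (not on $N$ or on the particular manifold), is the technical heart of the argument; once it is secured, the spanning-tree counting is routine.
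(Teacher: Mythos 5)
Your proposal has a fatal gap at exactly the step you flag as the ``technical heart'': Step 1 is not merely hard, it is false in the setting of this theorem. The bounded-geometry hypotheses (constant curvature $\ge k$, diameter $\le D$, \emph{total} volume $\ge V$) are global statements about the smooth space form; they impose no local constraint whatsoever on the simplices of a geometric triangulation, which are only required to be convex in the given metric and may be arbitrarily small and thin. In particular there is no ``lower volume bound per simplex'' available, and no uniform bound on vertex degrees exists: already the round $S^d$ (curvature $1$, fixed diameter and volume) admits geometric triangulations with a vertex of arbitrarily high degree, e.g.\ boundary complexes of simplicial $(d+1)$-polytopes with a high-degree vertex; similarly, a fan of $N$ thin convex triangles around a point gives geometric triangulations of a flat torus with unbounded degree. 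Your angle-defect reasoning applies to the \emph{equilateral flat} (dynamical-triangulations) model, where the metric is determined by the combinatorics and curvature bounds do constrain how many simplices fit around a codimension-two face; but the theorem here concerns geometric triangulations of a smooth space form, where the curvature hypothesis lives on the manifold, not on the complex. Step 2 is also shakier than you suggest: a degree bound on the \emph{dual} graph is automatic ($\le d+1$ for any manifold triangulation) and insufficient --- the superexponential family of genus-$g$ surfaces all have $3$-regular dual graphs --- and the real difficulty in any spanning-tree encoding is that the non-tree identifications form a pairing of the boundary ridges of the unfolded tree, which has superexponentially many possibilities unless one imposes a locality or structure condition (this is precisely what local constructibility, shellability, or endo-collapsibility provide in the Benedetti--Ziegler counting theorem). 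A genuine vertex-degree (vertex-star) bound could be turned into an exponential count by a breadth-first encoding, but, as above, no such bound holds.

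The paper's proof is designed precisely to avoid any local combinatorial bound. The geometric hypotheses are used only once, and only globally: Cheeger's lemma bounds the injectivity radius from below, and a Toponogov comparison argument shows the space form can be covered by a number $s=s(k,D,V,d)$ of convex metric balls, with $s$ independent of $N$. The combinatorial input is then convexity, not curvature: by the Adiprasito--Benedetti collapsibility theorems, the derived neighborhood $T'_i$ of the restriction of $\mathrm{sd}\, T$ to each convex ball is collapsible, its face links (being geometric triangulations of convex subsets of spheres) become endo-collapsible after two derived subdivisions, and hence $\mathrm{sd}^3\, T'_i$ is endo-collapsible. The Benedetti--Ziegler theorem then bounds the number of combinatorial types of each piece by $e^{\kappa N}$, and reassembling the $s$ pieces costs only polynomially many choices per pair of balls (one facet of each intersection $T'_i\cap T'_j$, plus its orientation, suffices because the intersection has connected dual graph), giving a bound of the form $e^{\kappa s N}\bigl((d+1)!^2\,N\bigr)^{s(s-1)}$. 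So the counting engine is bounded discrete Morse structure on boundedly many convex pieces, not bounded local complexity --- the latter simply is not implied by the hypotheses.
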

 
Since every topological triangulation of an orientable surface can be straightened to a geometric one \cite{Verdiere, Wagner}, this result is a generalization of the classical exponential bound on the number of triangulated surfaces with bounded genus.

Here is the proof idea. Via Cheeger's bounds on the injectivity radius, we chop any manifold of constant curvature into a finite number of convex pieces of small diameter. Up to performing a couple of barycentric subdivisions, we can assume that each piece is a shellable ball \cite{AB-Combinatorica}, and in particular endo-collapsible \cite{Benedetti-DMT4MWB}. This implies an upper bound for the number of critical faces that a discrete Morse function on the triangulation can have. From here we are able to conclude, using the second author's result that there are only exponentially many triangulations of manifolds with bounded discrete Morse vector \cite{Benedetti-DMT4MWB}.

Inspired by Gromov's question, let us now consider the unit sphere $S^d$ with its standard intrinsic metric. Let us agree to call ``geometric triangulation'' any tiling of $S^d$ into regions that are convex simplices with respect to the given metric and combinatorially form a simplicial complex. For example, all the boundaries of $(d+1)$-polytopes yield geometric triangulations of $S^d$, but not all geometric triangulations arise this way. How many geometric triangulations are there?

Once again, by proving that the second derived subdivision of every geometric triangulation is endo-collapsible, we obtain

\begin{thmnonumber}[Theorem~\ref{thm:stbound}]\label{mainthm:Gromov} 
There are at most $2^{d^2\cdot ((d+1)!)^2 \cdot N}$ distinct combinatorial types of geometric triangulations of the standard~$S^d$ with $N$ facets.
\end{thmnonumber}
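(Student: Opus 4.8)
The plan is to bound the number of geometric triangulations of the standard sphere by counting the combinatorial types that arise, using the discrete-Morse-theoretic machinery already assembled in the paper. The reference point is the stated strategy: first establish that the second derived subdivision $\sd^2 K$ of any geometric triangulation $K$ of $S^d$ is endo-collapsible, then feed this into the second author's counting result. So I would organize the argument around (i) producing a discrete Morse function with few critical cells and (ii) converting ``few critical cells'' into an enumeration bound.

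First I would set up the geometric input. A geometric triangulation of $S^d$ tiles the sphere into spherically convex simplices forming a simplicial complex. The key is that each closed star $\St v$ is a geometrically convex ball, so by the results quoted in the introduction (the shellability of such convex pieces after subdivision, via \cite{AB-Combinatorica}), its derived subdivisions are shellable balls, hence endo-collapsible by \cite{Benedetti-DMT4MWB}. The cleaner route, matching the sketch, is to work with the whole complex: I would show directly that $\sd^2 K$ is endo-collapsible, i.e.\ admits a discrete Morse function whose only critical faces are one vertex and one facet. The mechanism is to exploit the local convex structure to collapse $\sd^2 K$ minus an open facet down to a point, using the fact that each piece in the convex decomposition collapses and that collapses can be glued along a shelling-type order.

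Next I would track the combinatorial size. The crucial quantitative point is that barycentric subdivision multiplies the number of facets by exactly $(d+1)!$, so $\sd^2 K$ has $((d+1)!)^2\cdot N$ facets when $K$ has $N$ facets. Endo-collapsibility gives a discrete Morse function on $\sd^2 K$ with a bounded (in fact minimal) discrete Morse vector, so the hypotheses of \cite{Benedetti-DMT4MWB} apply to $\sd^2 K$. That result says that simplicial manifolds with $M$ facets and bounded discrete Morse vector number at most roughly $2^{cM}$ for an explicit constant $c=c(d)$; plugging in $M=((d+1)!)^2\cdot N$ and the dimensional constant $c = d^2$ (the exponent recording the branching of the discrete Morse traversal in dimension $d$) yields the bound $2^{d^2\cdot((d+1)!)^2\cdot N}$. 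Finally, since a geometric triangulation $K$ is determined up to combinatorial type by the combinatorial type of $\sd^2 K$ (subdivision is a combinatorial operation, so distinct types of $K$ give distinct types of $\sd^2 K$), counting the targets $\sd^2 K$ bounds the number of types of $K$.

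The main obstacle I expect is step (i): proving that $\sd^2 K$ is genuinely endo-collapsible, rather than merely that each local piece collapses. Endo-collapsibility of a closed manifold is a global statement (the complement of one open facet must collapse to a point), and gluing local collapses into one coherent discrete Morse function requires controlling how the collapses interact across shared faces. This is where the geometry does the work: the spherical convexity lets one order the simplices so that the induced sequence of collapses is consistent, analogous to how a shelling of a ball produces a collapsing. I would handle it by choosing a generic point $p\in S^d$ and using the ``radial'' or distance-from-$p$ structure to order the facets, checking that two derived subdivisions suffice to smooth out the convex pieces into a complex where this ordering yields an honest collapse after removing the facet containing $p$. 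Verifying that \emph{two} subdivisions (and not more) are enough, uniformly over all geometric triangulations, is the delicate part and is what pins down the exponent's constant.
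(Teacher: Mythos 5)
You have the paper's skeleton right---endo-collapsibility of $\sd^2 K$, then the counting theorem for endo-collapsible manifolds (Theorem~\ref{thm:ENDOexpbound}, due to Benedetti--Ziegler \cite{BZ}), then recovery of $K$ from $\sd^2 K$, with the same facet bookkeeping $((d+1)!)^2 N$---but both supporting steps have genuine gaps. The serious one is your step (i). Your proposed mechanism, ordering facets by distance from a single generic point $p$ and collapsing radially, is exactly the argument that works for convex complexes in $\R^d$, and it breaks on a closed sphere: metric balls in $S^d$ of radius greater than $\pi/2$ are not convex, so the sublevel sets of the distance function lose the convexity that drives such collapsing arguments, and no choice of a single center point makes the whole sphere amenable to a radial collapse. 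This is precisely why the paper's Proposition~\ref{prp:sphereendo} splits $S^d$ along a general-position hemisphere $H$ and its complement: the derived neighborhoods $N^1(\RS(C,H),C)$ and $N^1(\RS(C,H^c),C)$ are collapsible by Theorem~\ref{thm:ConvexEndo2} (imported as a black box from \cite{AB-part2}, not reproved), each becomes endo-collapsible after one further derived subdivision by Lemma~\ref{lem:endcollapse} (whose hypothesis on face links is itself handled by induction on dimension), and the two endo-collapsible balls are then glued along their common boundary sphere using the machinery of \cite{Benedetti-DMT4MWB}. Your plan to ``glue local collapses along a shelling-type order'' names this difficulty without resolving it; note also that your side remark that each closed vertex star of a geometric triangulation is a convex ball is false in general (the simplices are convex, their stars need not be), so the fallback route you mention does not start either.

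The second gap is in your last step. You need the map $K \mapsto \sd^2 K$ to be injective on combinatorial types, i.e.\ that $\sd^2 K_1 \cong \sd^2 K_2$ forces $K_1 \cong K_2$. Your justification---``subdivision is a combinatorial operation, so distinct types of $K$ give distinct types of $\sd^2 K$''---is a non sequitur: combinatoriality of $\sd$ gives well-definedness of the map (isomorphic complexes have isomorphic subdivisions), not injectivity. A priori two non-isomorphic triangulations could have isomorphic barycentric subdivisions; ruling this out is Bayer's theorem, which the paper proves as Lemma~\ref{lem:bayer} (the barycentric subdivision of a polytopal complex determines the complex up to duality, and at most one member of a dual pair is simplicial). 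This gap is fillable simply by invoking that lemma, but as written your argument asserts exactly what needs proof; the endo-collapsibility step, by contrast, is where the real geometric content of the theorem lives, and it remains unestablished in your proposal.
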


Our methods rely, as explained, on convex and metric geometry. Whether \emph{all} triangulations of $S^d$ are exponentially many, remains open. But even if the answer turned out to be negative, Main Theorems~\ref{mainthm:cheeger} and~\ref{mainthm:Gromov} provide some support for the hope of discretizing quantum gravity in all dimensions.

\section*{Preliminaries}

By $\R^d$, $\mathbb{H}^d$ and $S^d$ we denote the euclidean $d$-space, the hyperbolic $d$-space, and the unit sphere in $\R^{d+1}$, respectively. A \emph{(euclidean) polytope} in $\R^d$ is the convex hull of finitely many points in $\R^d$. Similarly, a \emph{hyperbolic polytope} in $\mathbb{H}^d$ is the convex hull of finitely many points of $\mathbb{H}^d$. A \emph{spherical polytope} in $S^d$ is the convex hull of a finite number of points that all belong to some open hemisphere of $S^d$. Spherical polytopes are in natural one-to-one correspondence with euclidean polytopes, just by taking radial projections; the same is true for hyperbolic polytopes. A \emph{geometric polytopal complex} in $\R^d$ (resp.\ in $S^d$ or $\mathbb{H}^d$) is a finite collection of polytopes in $\R^d$ (resp.~$S^d$, $\mathbb{H}^d$) such that the intersection of any two polytopes is a face of both. An \emph{intrinsic polytopal complex} is a collection of polytopes that are attached along isometries of their faces (cf.\ Davis--Moussong~\cite[Sec.\ 2]{DavisMoussong}), so that the intersection of any two polytopes is a face of both. 

The \emph{face poset} $(C, \subseteq)$ of a polytopal complex $C$ is the set of nonempty faces of $C$, ordered with respect to inclusion.  Two polytopal complexes $C,\, D$ are \emph{combinatorially equivalent}, denoted by $C\cong D$, if their face posets are isomorphic. Any polytope combinatorially equivalent to the $d$-simplex, or to the regular unit cube $[0,1]^d$, shall simply be called a \emph{$d$-simplex} or a \emph{$d$-cube}, respectively. A polytopal complex is \emph{simplicial}  (resp.~\emph{cubical}) if all its faces are simplices (resp.~cubes). 

The \emph{underlying space} $|C|$ of a polytopal complex $C$ is the topological space obtained by taking the union of its faces. If two complexes are combinatorially equivalent, their underlying spaces are homeomorphic. 
If $C$ is simplicial, $C$ is sometimes called a \emph{triangulation} of $|C|$ (and of any topological space homeomorphic to $|C|$). If $X$ is a metric length space, we call \emph{geometric triangulation} any intrinsic simplicial complex $C$ isometric to $X$ such that the simplices of $C$ are convex in the underlying length metric. 
For example, the boundary of every simplicial $(d+1)$-polytope yields a geometric triangulation of~$S^{d}$. 

\begin{figure}[htb]
	\centering
  \includegraphics[width=.19\linewidth]{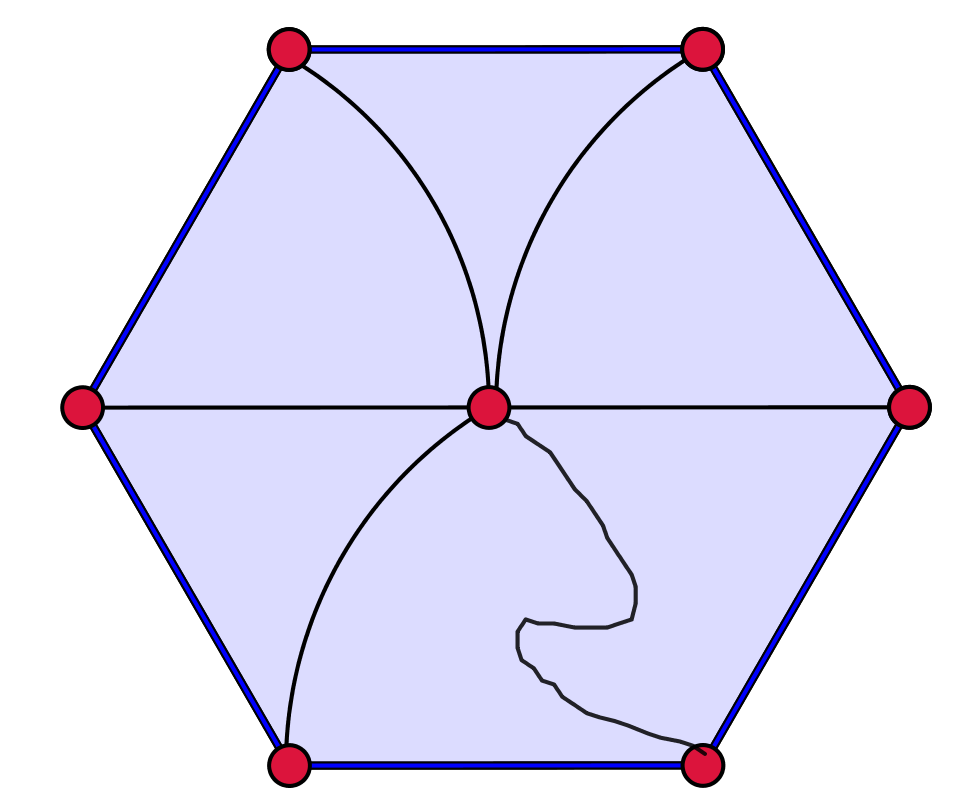}
  \hskip10mm
    \includegraphics[width=.21\linewidth]{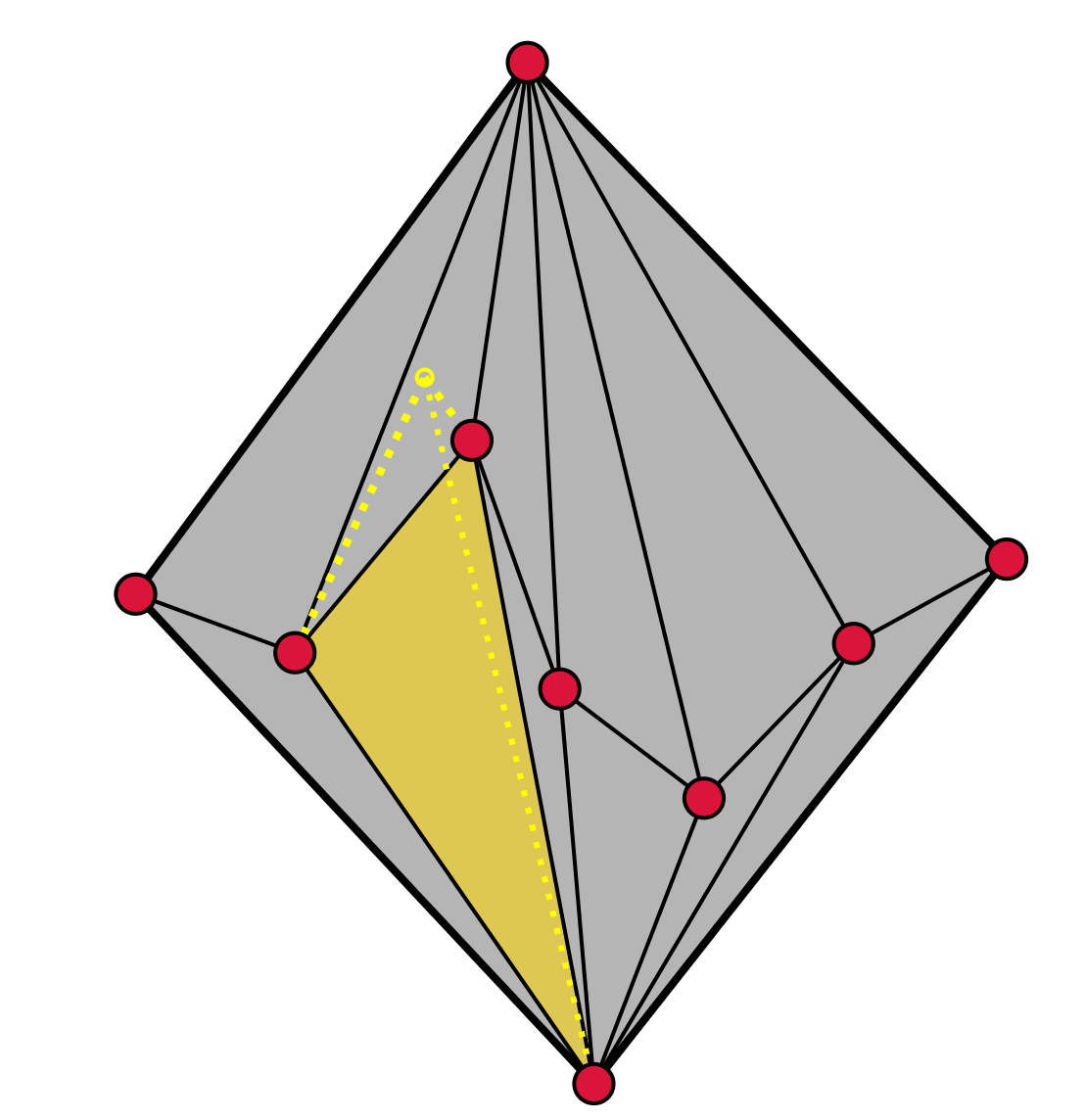}
 	\caption{\footnotesize \textsc{Left:} A tiling of a disk that is not a geometric triangulation, because the tiles are not convex. \textsc{Right:} Rudin's $3$-ball $R$ is a non-shellable subdivision of a convex $3$-dimensional polytope with $14$ vertices, cf.~\cite{Wotzlaw}. Coning off the boundary of $R$ one gets a simplicial complex $\partial (v \ast R)$ that is a geometric triangulation of $S^3$, but it is not shellable, hence not polytopal.}
	\label{fig:NonGeo}
\end{figure}

A \emph{subdivision} of a polytopal complex $C$ is a polytopal complex $C'$ with the same underlying space of $C$, such that for every face $F'$ of $C'$ there is some face $F$ of $C$ for which $F' \subset F$. A \emph{derived subdivision} $\sd  C$ of a polytopal complex $C$ is any subdivision of $C$ obtained by stellarly subdividing at all faces in order of decreasing dimension of the faces of $C$, cf.\ \cite{Hudson}.  An example of a derived subdivision is the \emph{barycentric subdivision}, which uses as vertices the barycenters of all faces of $C$.
 
If $C$ is a polytopal complex, and $A$ is some set, we define the \emph{restriction $\RS(C,A)$ of $C$ to $A$} as the inclusion-maximal subcomplex $D$ of $C$ such that $|D|$ lies in $A$. The \emph{star} of $\sigma$ in $C$, denoted by $\St(\sigma, C)$, is the minimal subcomplex of $C$ that contains all faces of $C$ containing $\sigma$. The \emph{deletion} $C-D$ of a subcomplex $D$ from $C$ is the subcomplex of $C$ given by $\RS(C,  C{\setminus} \rint{D})$. The \emph{(first) derived neighborhood} $N(D,C)$ of $D$ in $C$ is the simplicial complex
\[N(D,C):=\bigcup_{\sigma\in \sd D} \St(\sigma,\sd C). \] 
Similarly, for $k > 1$,  the $k$-th derived subdivision of a complex $C$ is recursively defined as $\sd C = \sd (\sd^{k-1} C)$. The \emph{$k$-derived neighborhood of $D$ in $C$}, denoted by $N^k(D,C)$, is the union of $\St(\sigma, \sd^k C)$, with $\sigma \in \sd^k D$.

\begin{figure}[hbtp] 	\centering
\includegraphics[scale=.44]{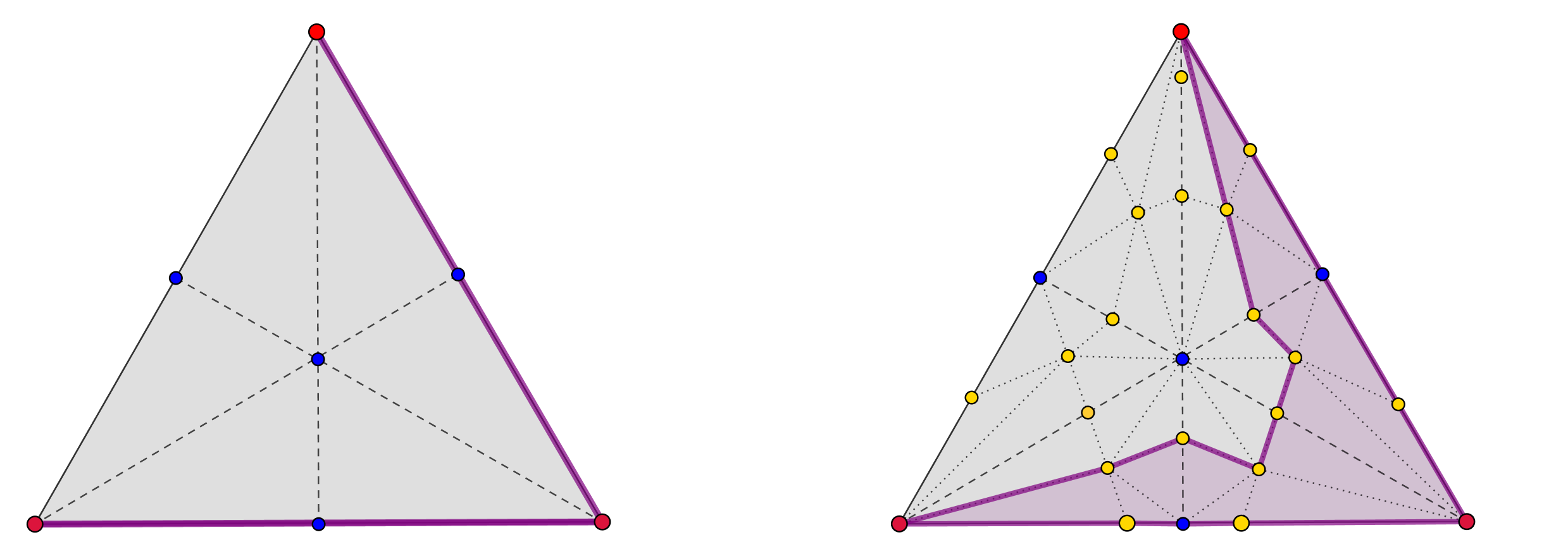} 
\caption{\footnotesize $N^2(D,C)$, where $C$ is a triangle and $D$ the subcomplex formed by two of its edges.}
\end{figure}

Next comes the geometric definition of the link of a face $\sigma$. Intuitively, it is a spherical complex whose face poset is the upper order ideal of $\sigma$ in the face poset of $C$. The formal definition is as follows, cf.~\cite{AB-part2}. Let $p$ be any point of a metric space $X$. By $\TT_p X$ we denote the tangent space of $X$ at $p$. Let $\TT^1_p X$ be the restriction of $\TT_p X$ to unit vectors.  If $Y$ is any subspace of $X$, then $\RN_{(p,Y)} X$ denotes the subspace of the tangent space $\TT_p X$ spanned by the vectors orthogonal to $\TT_p Y$. If $p$ is in the interior of $Y$, we define $\RN^1_{(p,Y)} X:= \RN_{(p,Y)} X \cap \TT^1_p Y$.
If $\tau$ is any face of a polytopal complex $C$ containing a nonempty face $\sigma$ of $C$, then the set $\RN^1_{(p,\sigma)} \tau$ of unit tangent vectors in $\RN^1_{(p,\sigma)} |C|$ pointing towards $\tau$ forms a spherical polytope $P_p(\tau)$, isometrically embedded in $\RN^1_{(p,\sigma)} |C|$. The family of all polytopes $P_p(\tau)$ in $\RN^1_{(p,\sigma)} |C|$ obtained for all $\tau \supset \sigma$ forms a polytopal complex, called the \emph{link} of $C$ at $\sigma$; we will denote it by $\Lk_p(\sigma, C)$. If $C$ is a geometric polytopal complex in $X^d=\R^d$ (or $X^d=S^d$), then $\Lk_p(\sigma, C)$ is naturally realized in $\RN^1_{(p,\sigma)} X^d$. Obviously, $\RN^1_{(p,\sigma)} X^d$ is isometric to a sphere of dimension $d-\dim \sigma -1$, and will be considered as such. Up to ambient isometry $\Lk_p(\sigma, C)$ and  $\RN^1_{(p,\sigma)} \tau$ in $ \RN^1_{(p,\sigma)} |C|$ or $\RN^1_{(p,\sigma)} X^d$ do not depend on $p$; for this reason, $p$ will be omitted in notation whenever possible. By convention, we define $\Lk(\emptyset, C)=C$, and it is the only link that does not come with a natural spherical metric.

If $C$ is a simplicial complex, and $\sigma$, $\tau$ are faces of $C$, we denote by $\sigma\ast \tau$ the minimal face of $C$ containing both $\sigma$ and $\tau$ (if there is one). If $\sigma$ is a face of $C$, and $\tau$ is a face of $\Lk(\sigma,C)$, then $\sigma \ast \tau$ is defined as the face of $C$ with $\Lk(\sigma,\sigma \ast \tau)=\tau$. In both cases, the operation~$\ast$ is called the \emph{join}.

\begin{figure}[htb]
	\centering
  \includegraphics[width=.39\linewidth]{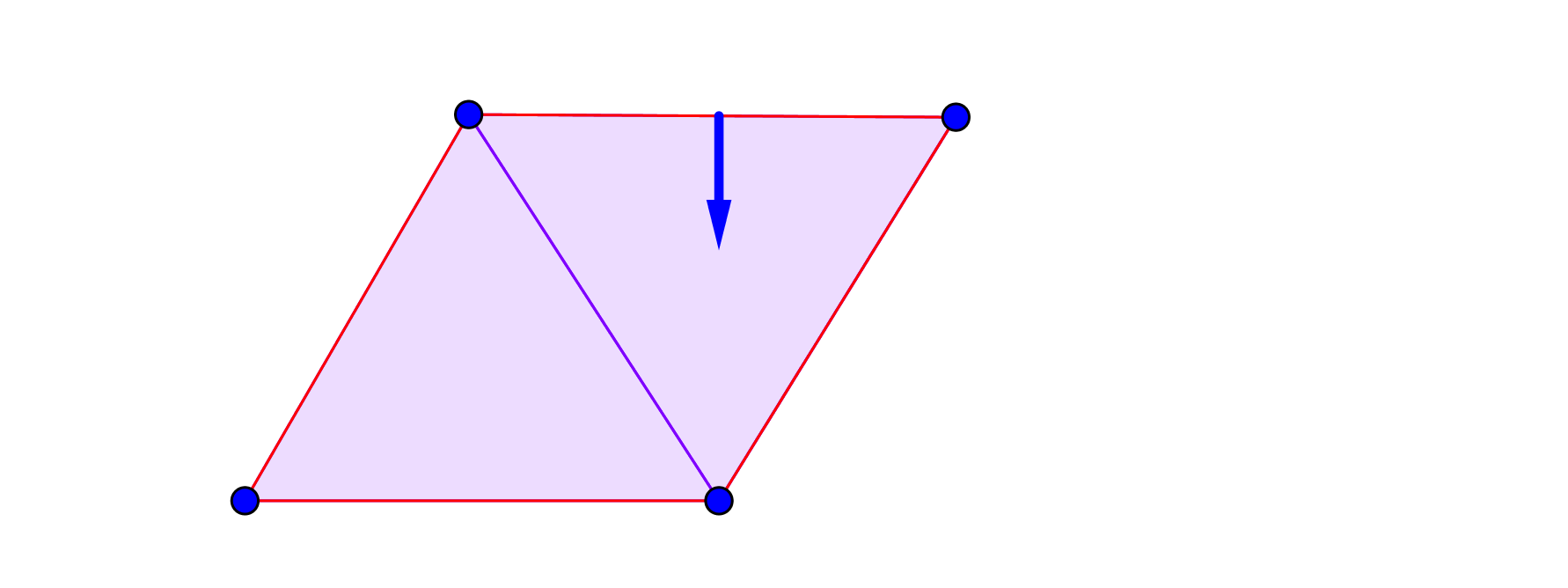}
  \hskip4mm
    \includegraphics[width=.39\linewidth]{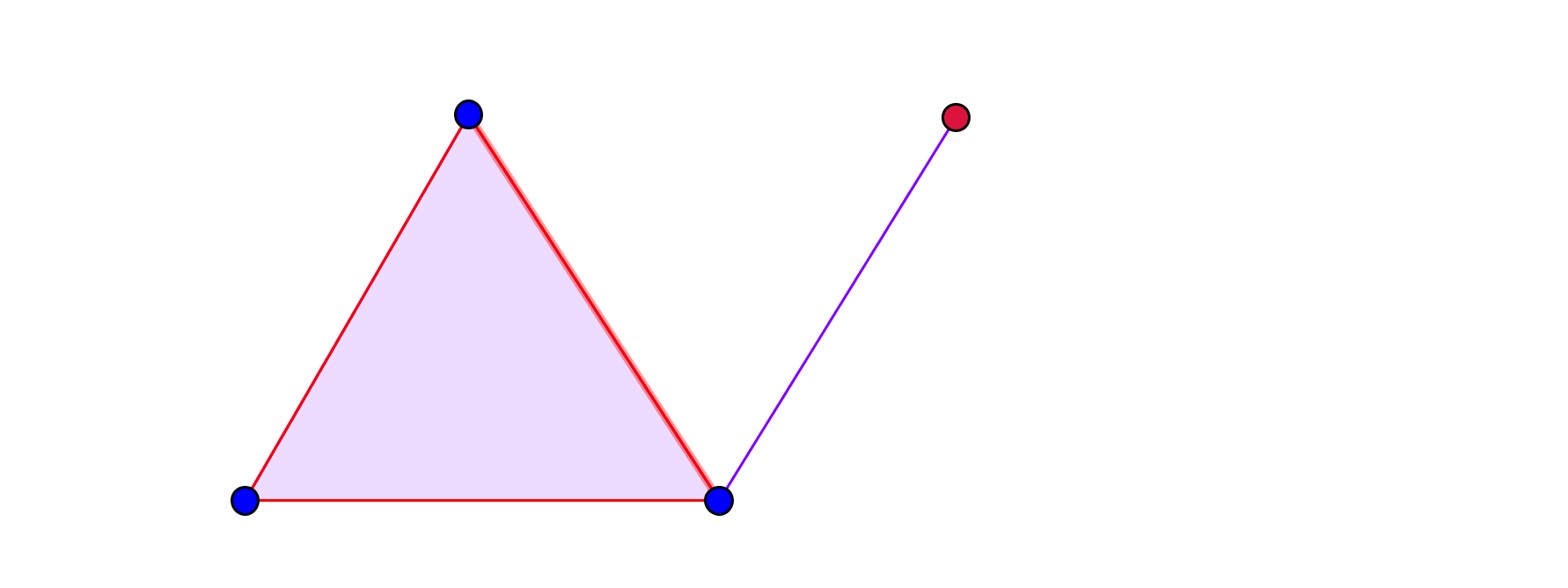}
 	\caption{\footnotesize The complex on the left has four free edges (in red). The deletion of any of them is callled an `elementary collapse'. It yields a complex with two fewer faces and same homotopy type (right). Such complex still has free faces, so the simplification process can be carried out further.}
	\label{fig:ElementaryCollapse}
\end{figure}

Inside a polytopal complex $C$, a \emph{free} face $\sigma$ is a face strictly contained in only one other face of $C$. An \emph{elementary collapse} is the deletion of a free face $\sigma$ from a polytopal complex~$C$. We say that $C$ \emph{(elementarily) collapses} onto $C-\sigma$, and write $C\searrow_e C-\sigma.$ We also say that the complex $C$ \emph{collapses} to a subcomplex $C'$, and write~$C\searrow C'$, if $C$ can be reduced to $C'$ by a sequence of elementary collapses. A \emph{collapsible} complex is a complex that collapses onto a single vertex. Collapsibility is, clearly, a combinatorial property (i.e. it only depends on the combinatorial type), and does not depend on the geometric realization of a polytopal complex. We have however the following results:

\begin{theorem}[Adiprasito--Benedetti \cite{AB-part2}] \label{thm:ConvexEndo}
Let $C$ be a simplicial complex. If the underlying space of $C$ in $\mathbb{R}^d$ is convex, then the (first) derived subdivision of $C$ is collapsible.
\end{theorem}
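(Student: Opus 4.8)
The plan is to equip the derived subdivision $\sd C$ with a discrete Morse function having a single critical cell; by Forman's discrete Morse theory this is equivalent to $\sd C$ being collapsible. Equivalently, I will construct an acyclic matching on the Hasse diagram of $\sd C$ whose only unmatched face is a single vertex. The entire construction is steered by a \emph{generic linear functional} on $\R^d$, which plays the role of a smooth Morse function on the convex body $|C|$; convexity is exactly what will rule out spurious critical cells.

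First I would fix a linear functional $\ell\colon \R^d\to\R$ that is injective on the finitely many barycenters of faces of $C$, that is, on the vertex set of $\sd C$. Recall that each face of $\sd C$ is a flag $F_0\subsetneq\cdots\subsetneq F_k$ of faces of $C$, whose vertices are the barycenters $b(F_0),\dots,b(F_k)$. The functional $\ell$ then totally orders the vertices of $\sd C$. Since $|C|$ is a compact convex body, $\ell$ attains its minimum over $|C|$ at an extreme point, which must be an honest vertex $v_0$ of $C$ (extreme points cannot be subdivided away); as the barycenter of a $0$-face, $b(v_0)=v_0$ is then the unique $\ell$-minimal vertex of $\sd C$, and it will be the single critical cell.

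Next comes the matching, which should be read as a discrete gradient flow in the direction $-\nabla\ell$: each simplex of $\sd C$ is pushed toward lower $\ell$-values and paired with the face obtained by adjoining or deleting the vertex its flow meets first. Concretely I would process the vertices $b(F)$ of $\sd C$ in increasing $\ell$-order and match within each \emph{lower star} — the faces whose $\ell$-largest vertex is $b(F)$. Matching only within lower stars makes the discrete vector field automatically acyclic across distinct values of $\ell$, so the residual task is purely local. The geometric content to check is that, for every $F$ with $b(F)\neq v_0$, the lower star of $b(F)$ carries a matching that is perfect (no critical cell), whereas the lower star of $v_0$ contributes the single critical vertex $v_0$. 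This is the point where convexity enters: every sublevel set $\{x\in |C| : \ell(x)\le t\}$ is convex, hence contractible, so crossing the $\ell$-value of any $b(F)\neq v_0$ must leave the homotopy type unchanged, forcing the corresponding cells to cancel in pairs.

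The main obstacle, and the only place that genuinely needs both convexity and the passage to the derived subdivision, is to exhibit these cancelling local matchings explicitly and to verify their acyclicity. In $\sd C$ the link of a vertex $b(F)$ splits as the join $\sd(\partial F)\ast\sd(\Lk(F,C))$, so the lower star of $b(F)$ is cut out of this join by the inequality $\ell<\ell(b(F))$; one must produce a free face inside this geometrically defined region. The key is that for $F\neq v_0$ the descent direction $-\nabla\ell$ points from $b(F)$ into $|C|$, and convexity guarantees a consistent lowest vertex onto which the region can be collapsed — precisely the local input that fails for non-convex underlying spaces, where $-\nabla\ell$ may point outside $|C|$ and leave the lower star with uncancelled critical cells. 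Once each lower star is handled, acyclicity along the $\ell$-order assembles the local pieces into a global acyclic matching on $\sd C$ with the unique critical cell $v_0$, and collapsibility of $\sd C$ follows.
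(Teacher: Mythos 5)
Your overall framework --- sweep $\sd C$ with a generic linear functional $\ell$, match faces only within lower stars (which indeed makes acyclicity across distinct $\ell$-levels automatic), and isolate the $\ell$-minimal vertex $v_0$ as the unique critical cell --- is sound, and it is in fact the skeleton of the argument in the cited source \cite{AB-part2}; note that the present paper does not prove Theorem~\ref{thm:ConvexEndo} at all, but imports it as a black box. However, the step that carries all the weight in your write-up is not a valid deduction. You argue that since every sublevel set $\{x\in |C| : \ell(x)\le t\}$ is convex, hence contractible, crossing the $\ell$-value of any $b(F)\neq v_0$ ``must leave the homotopy type unchanged, forcing the corresponding cells to cancel in pairs.'' Discrete Morse theory does not permit this inference: preservation of homotopy type does \emph{not} force the existence of a perfect acyclic matching on a lower star. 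What you actually need is that the \emph{descending link} of each vertex $b(F)\neq v_0$ (the subcomplex of $\Lk(b(F),\sd C)$ spanned by the $\ell$-lower vertices) is \emph{collapsible}, and collapsibility is strictly stronger than contractibility --- the dunce hat and Bing's house are contractible but not collapsible. This distinction is the entire difficulty of the theorem: if contractibility of sublevel sets sufficed, the same sweep would prove that $C$ itself (not just $\sd C$) is collapsible, and no known argument gives that; the derived subdivision is essential and your proposal never explains where it is genuinely used.

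The proof in \cite{AB-part2} closes exactly this gap by an induction on dimension intertwined with a spherical statement, which appears in this paper as Theorem~\ref{thm:ConvexEndo2}. The link of a face of $C$ is a convex polytopal complex in a sphere; the descending link of $b(F)$ in $\sd C$ is, via the join decomposition you wrote down, essentially a derived neighborhood $N(\RS(\,\cdot\,,\overline{H}_+),\,\cdot\,)$ of the restriction of such a spherical complex to the hemisphere of directions along which $\ell$ decreases; and parts (A) and (B) of Theorem~\ref{thm:ConvexEndo2} assert precisely that these derived neighborhoods are collapsible (respectively, collapse onto the corresponding boundary piece). Proving those spherical statements requires its own nontrivial induction, and it is exactly there that convexity and the passage to the derived subdivision do real work; your appeal to ``a consistent lowest vertex onto which the region can be collapsed'' asserts the conclusion of that induction rather than proving it.
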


We are also going to apply to certain face links the following ``spherical version'' of the statement above.

\begin{theorem}[{\cite{AB-part2}, cf.~also \cite[Chapter II.2]{Athesis}}] \label{thm:ConvexEndo2}
Let $C$ be any convex polytopal $d$-complex in $S^d$. Let $\overline{H}_+$ be a closed hemisphere of $S^d$ in general position with respect to $C$. 
\begin{compactenum}[\rm (A)]
\item If $\partial C\cap \overline{H}_+=\emptyset$, then $N(\RS(C,\overline{H}_+),C)$ is collapsible.
\item If $\partial C\cap \overline{H}_+$ is nonempty, and $C$ does not lie in $\overline{H}_+$, then $N(\RS(C,\overline{H}_+),C)$ collapses to the subcomplex $N(\RS(\partial C,\overline{H}_+),\partial C)$.
\item If $C$ lies in $\overline{H}_+$, then there exists some facet $F$ of $\sd \partial C$ such that $\sd C$ collapses to $C_F:=\sd \partial C-F$.
\end{compactenum}
\end{theorem}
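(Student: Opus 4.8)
The plan is to prove the three statements simultaneously by induction on the dimension $d$, using as the driving object the linear functional $h=\langle\,\cdot\,,v\rangle$ on $S^d$ that cuts out the hemisphere, so that $\overline{H}_+=\{h\ge 0\}$ and the equator is $E=\{h=0\}$. Since $|C|$ is convex, the superlevel sets $\{h\ge t\}\cap|C|$ form a nested family of convex caps, and this ``sweep'' in the $v$-direction is what organizes every collapse below: one matches each simplex of the relevant derived subdivision with the adjacent simplex obtained by toggling its top vertex with respect to $h$, i.e.\ one uses the discrete gradient of a generic approximation of $h$. The one tool I import from the Euclidean world is central (gnomonic) projection: on any open hemisphere it is a geodesic-preserving, hence convexity-preserving, homeomorphism onto $\R^d$, and it is a combinatorial isomorphism, so it carries links, stars, derived subdivisions and collapses of the part of $C$ lying in an open hemisphere verbatim to the corresponding Euclidean notions. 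This lets me feed pieces of $C$ into Theorem~\ref{thm:ConvexEndo}.

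I would start with case (C), which is the backbone. Here $C\subseteq\overline{H}_+$, and general position (no vertex on $E$, every face transverse to $E$) forces every face of $C$ to lie in the open hemisphere $H_+^\circ$; projecting, $C$ becomes a Euclidean convex polytopal $d$-complex $\hat C$, and the claim becomes the Euclidean refinement $\sd\hat C\searrow\sd\partial\hat C-F$ for a suitable facet $F$. I would prove this by the same generic-functional sweep that underlies Theorem~\ref{thm:ConvexEndo}, but recording where it terminates: taking a generic linear $f$ and letting $F$ be the facet of $\sd\partial\hat C$ determined by the $f$-maximal vertex, the gradient matching of $f$ pairs every simplex with its neighbor below, leaving unmatched precisely the simplices of $\partial\hat C$ away from $F$; convexity of $|\hat C|$ guarantees the gradient has no closed $V$-paths, so the matching is acyclic and yields the collapse. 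At each freeness check the link $\Lk(\sigma,\hat C)$ is a lower-dimensional convex complex, so the inductive hypothesis (all three cases in dimension $<d$) supplies the local collapses; this is exactly why the three parts must be proved together.

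Cases (A) and (B) then concern the thickened cap $N(\RS(C,\overline{H}_+),C)$, and I would analyze them through the same downward sweep together with the regular-neighborhood fact that $N(D,C)\searrow\sd D$. The conceptual distinction is clean: pushing the cap in the $-v$ direction, the flow tries to carry everything down and out of $\overline{H}_+$. In case (A) the boundary $\partial C$ lies entirely in $\{h<0\}$, so nothing pins the cap from below and the sweep collapses $N(\RS(C,\overline{H}_+),C)$ all the way to a point; the absence of extra critical cells is controlled by passing to links and, for the purely interior faces, by central projection into a Euclidean convex complex where Theorem~\ref{thm:ConvexEndo} applies. In case (B) part of $\partial C$ pokes above $E$, and the downward flow gets pinned exactly there, so the collapse terminates on the induced neighborhood $N(\RS(\partial C,\overline{H}_+),\partial C)$ of that boundary chunk; here too each elementary collapse is verified by passing to $\Lk(\sigma,C)$, where the induced hemisphere produces a lower-dimensional instance of (A), (B) or (C).

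The step I expect to be the main obstacle is the equatorial band, where faces of $C$ straddle $E$ and the restriction $\RS(\,\cdot\,,\overline{H}_+)$ cuts faces of $C$ without being a full convex subcomplex. There one must check two things that the convex interior does not see: that the downward gradient matching stays acyclic across $E$ (no $V$-path loops through the cut faces), and that the collapse stops on exactly the claimed subcomplex rather than stranding critical cells on the equator. General position is what rules out tangencies and ties of $h$, but propagating it through the derived subdivisions and, above all, coordinating the simultaneous induction — case (C) on links feeding the freeness tests in (A) and (B), and case (B) furnishing the bulk-to-boundary reduction — is where essentially all the care is required.
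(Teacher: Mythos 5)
This theorem is not proved in the paper at all: it is imported verbatim from \cite{AB-part2} (cf.~also \cite[Ch.~II.2]{Athesis}), so there is no in-paper argument to compare yours against; your sketch has to be measured against the proof in that source. At the level of architecture you have reconstructed it correctly: the cited proof does run a simultaneous induction on $d$ over statements of type (A), (B), (C), does organize the collapses by a sweep associated to the hemisphere, does pass between $S^d$ and $\R^d$ by central projection, and does verify freeness of faces by passing to links, where the sweep induces a lower-dimensional hemisphere instance.

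As a proof, however, your text has a genuine gap exactly at its load-bearing step, case (C). The sentence ``convexity of $|\hat C|$ guarantees the gradient has no closed $V$-paths, so the matching is acyclic and yields the collapse'' asserts the hard content of the theorem rather than proving it, and the argument as written proves too much: nothing in that paragraph uses that you passed to the derived subdivision, so the same words would ``show'' that $\hat C$ itself collapses to $\partial\hat C-F$ --- but whether every geometric triangulation of a convex polytope is even collapsible is a long-standing open problem (Lickorish's question, which is precisely why Theorem~\ref{thm:ConvexEndo} and statement (C) involve $\sd$). Concretely, the ``toggle the top vertex'' matching is not a well-defined acyclic matching with the claimed critical cells: the actual mechanism is that the lower star of each vertex of $\sd\hat C$ (a cone over part of a link) must be collapsed internally, and it is statements (A)/(B) applied to that link's induced hemisphere --- not genericity of $f$ --- that certify both acyclicity and the absence of stranded critical cells; this is where the barycentric structure is essential, and it is absent from your construction. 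Two further points: the equatorial band, which you rightly flag as the main obstacle in (A)/(B), is left entirely unresolved, and in the cited source that is where most of the work lies; and the regular-neighborhood fact $N(D,C)\searrow\sd D$ runs the wrong way for (A), since it would reduce collapsibility of $N(\RS(C,\overline{H}_+),C)$ to collapsibility of $\sd\RS(C,\overline{H}_+)$, whose underlying space is in general not convex, so neither Theorem~\ref{thm:ConvexEndo} nor the inductive hypothesis applies to it --- the collapse in (A)/(B) must be built directly on the derived neighborhood.
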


\section{Geometric triangulations}
If we want to reach exponential bounds for triangulations of $d$-manifolds, and $d$ is at least two, we \emph{must} add some geometric or topological assumption. In fact, already for $d=2$, it is easy to construct $g!$ combinatorially inequivalent triangulations with $14g+5$ triangles of the genus-$g$ surface, cf.~Figure \ref{fig:3}. Setting $N = 14g+5$, clearly  $\lfloor \frac{N}{14} \rfloor !$ grows faster than any exponential. This motivates the search for an exponential upper bound to the number of triangulations with extra geometric properties. 

\begin{figure}[hbtp]  \centering
\includegraphics[scale=.10]{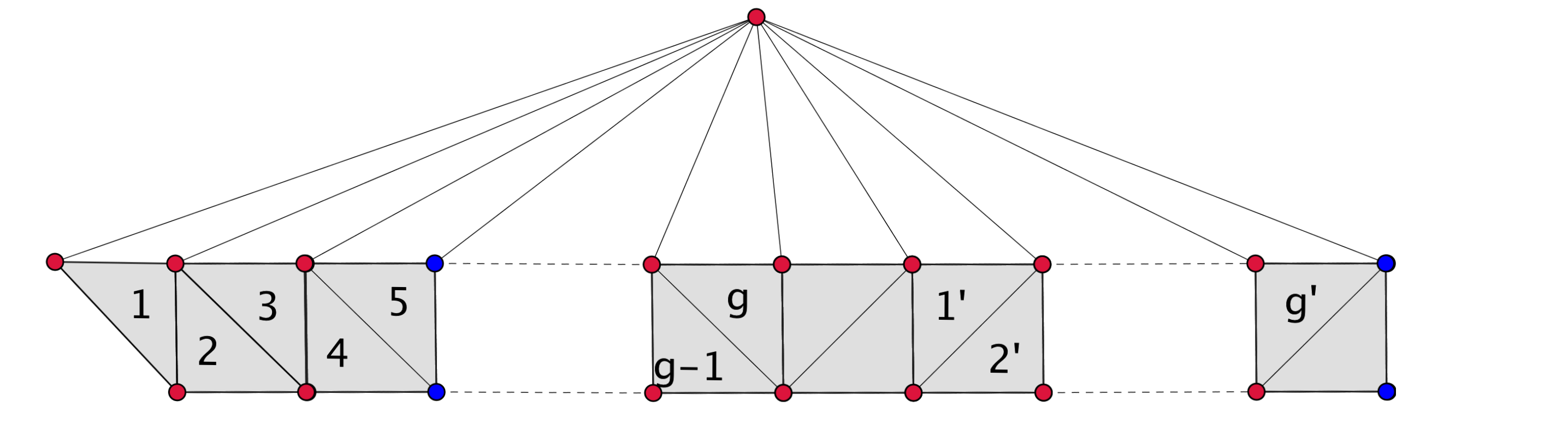} 
\caption{To construct $g!$ triangulations of the genus-$g$ surface, fix a bijection $\pi: \{1, \ldots, g\} \rightarrow \{1, \ldots, g'\}$. Take a strip of $2g+3$ triangles as above, and cone off its boundary to get a $2$-sphere with $4g+5$ triangles. Now remove the interiors of the $2g$ triangles $1, \ldots, g, 1', \ldots, g'$, and create a genus-$g$ surface by attaching handles between the hole $i$ and the hole $\pi(i)$, for all $i$. Since every handle can be triangulated using $12$ triangles, this yields a triangulation $T_{\pi}$  with $14g+5$ triangles of the genus-$g$ surface. By inspecting the link of the highest-degree vertex, from $T_{\pi}$ we can recover $\pi$, which implies that different permutations yield different triangulations.\label{fig:3}}	\end{figure}

Let us recall the notion of {\it endo-collapsibility}, introduced in \cite{Benedetti-DMT4MWB}. A triangulation $C$ of a $d$-manifold with non-empty boundary is called \emph{endo-collapsible} if $C$ minus some $d$-face $\Sigma$ collapses onto $\partial C$. A triangulation $C$ of a $d$-manifold with empty boundary is called \emph{endo-collapsible} if $C$ minus some $d$-face $\Sigma$ collapses onto some vertex $v$. (The choice of $\Sigma$ and that of $v$ do not matter: if a sphere $C$ is endo-collapsible, then for any face $\Delta$ and for any vertex $w$, one has that $C$ minus $\Delta$ collapses onto $w$.)  

\begin{figure}[hbtp] 	\centering
\includegraphics[scale=.39]{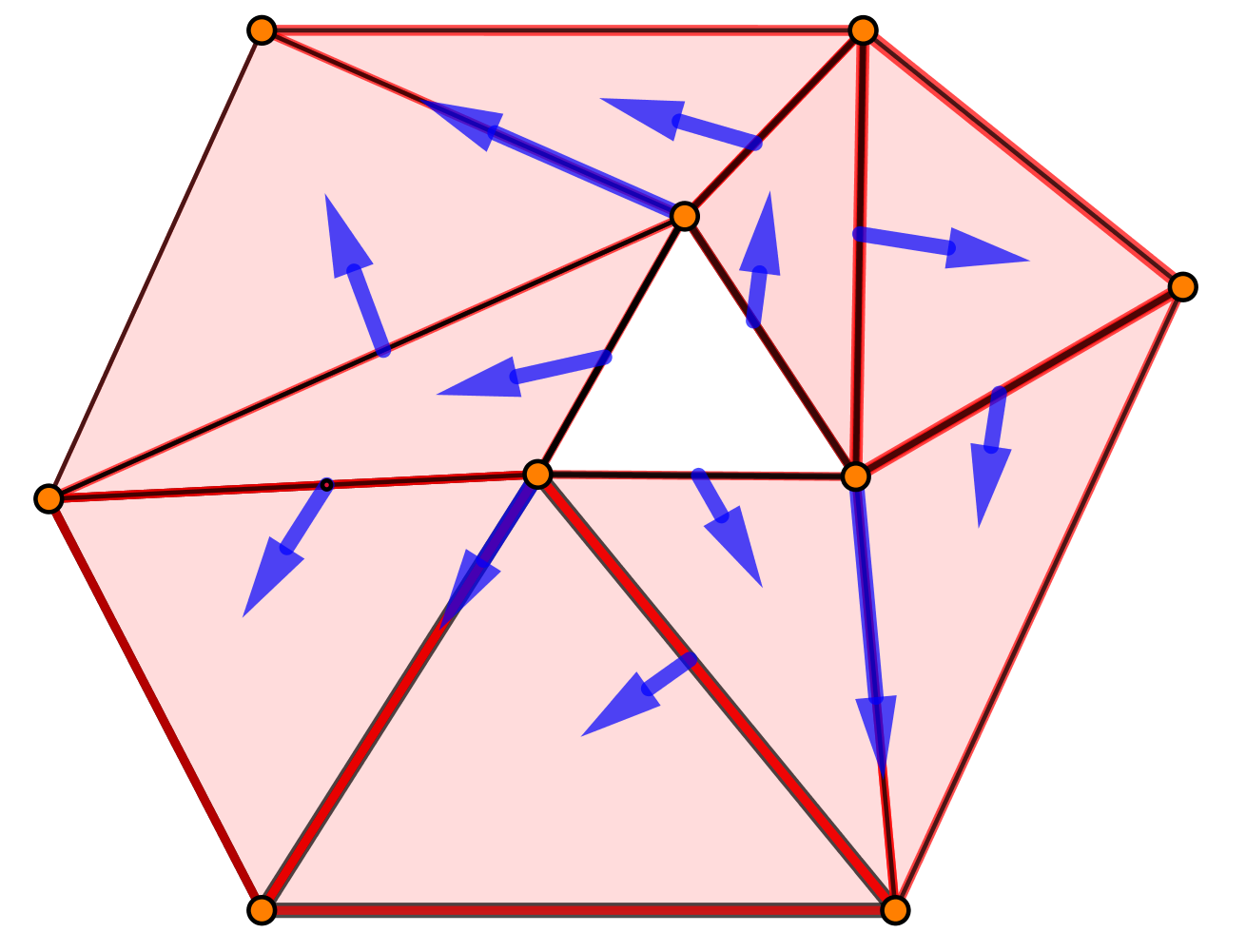} 
\caption{A $d$-ball is endo-collapsible if it collapses to the boundary after removing the interior of some $d$-simplex (it does not matter which). In dimension $2$, all disks are endo-collapsible. We represent an elementary collapse $(\sigma, \Sigma)$ by drawing an arrow between the barycenters of the two faces, with its tail on the lower-dimensional one. 
}
\end{figure}

This notion is of interest to us for the following exponential upper bound:

\begin{theorem}[Benedetti--Ziegler~{\cite{BZ}}]\label{thm:ENDOexpbound}
For fixed $d$, in terms of the number $N$ of facets, there are at most $2^{d^2 \cdot N}$ endo-collapsible triangulations of $d$-manifolds with $N$ facets.
\end{theorem}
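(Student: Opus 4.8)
The plan is to define an \emph{injective encoding} that assigns to every endo-collapsible triangulation $C$ of a $d$-manifold with $N$ facets a binary string of length at most $d^2\cdot N$; since there are at most $2^{d^2 N}$ such strings, the bound follows. The encoding will read off the collapse guaranteed by endo-collapsibility, storing only a bounded amount of information per facet.

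First I would fix a facet $\Sigma_0$ and a collapse of $C-\Sigma_0$ onto $\partial C$ (in the boundary case) or onto a single vertex (in the closed case). Because $C$ is $d$-dimensional, no $d$-face is ever free, so in this collapse every facet is discarded as the \emph{coface} of a free ridge $\sigma$; the pair $(\sigma,\Sigma)$ identifies, for each facet $\Sigma\neq\Sigma_0$, a neighbouring facet across $\sigma$ that was removed earlier. Reading these ridge–facet pairs backwards therefore exhibits $C$ as built up one facet at a time along a rooted spanning tree $T$ of the dual graph, starting from $\Sigma_0$. Storing $T$ costs only the attaching ridge of each facet, i.e.\ $\lceil\log_2(d+1)\rceil<d$ bits per facet; gluing the facets along $T$ produces an explicit simplicial $d$-ball $B$ with $N$ facets, together with a canonical traversal order of its boundary.

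The remaining elementary collapses all have dimension at most $d-1$, and reversing them realises $C$ as a quotient of $B$: they fold the boundary $\partial B$ onto itself by successively \emph{identifying} pairs of boundary ridges that share a common codimension-two face. This is the point where I would invoke (and, if necessary, reprove) the fact that endo-collapsibility forces this folding to be a \emph{local construction}: at each step the two identified ridges meet along a free codimension-two face of the current boundary, so the step is determined by a purely local choice among the $O(d)$ ridges incident to that face. Sequencing these identifications along the canonical boundary traversal of $B$, I can record the whole gluing pattern as a parenthesization-type string over the $\le(d+1)N$ boundary ridges — for each ridge, one bit for ``glued / not glued'' plus $O(\log d)$ bits selecting the local partner — which again costs only $O(d)$ bits per facet. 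Adding the tree cost, the total stays below $d^2\cdot N$.

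The main obstacle is precisely the locality claim in the previous step: one must show that the lower-dimensional part of the endo-collapse can always be arranged so that every identification occurs along a shared free codimension-two face, and that the resulting gluing pattern is \emph{decodable} from the traversal alone — so that no identification ever requires naming a face of $B$ from scratch, which would cost $\log N$ bits and destroy the linear budget. Establishing this local constructibility of endo-collapsible manifolds, and checking that the decoding map is well defined and inverse to the encoding, is the heart of the argument; once it is in place, the bit count and hence the bound $2^{d^2 N}$ follow by the routine accounting sketched above.
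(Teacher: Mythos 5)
Your strategy is, in substance, the one behind the cited result: the proof in \cite{BZ} (extended to manifolds with boundary in \cite{Benedetti-DMT4MWB}) establishes exactly this bound by showing that an endo-collapsible manifold is \emph{locally constructible} --- obtainable from a tree of $N$ $d$-simplices by repeatedly identifying two boundary ridges sharing a $(d-2)$-face --- and then counting locally constructible manifolds via a tree-plus-gluings encoding much like yours. Your first step is also correct as stated: since facets are never free, each facet other than $\Sigma_0$ is removed as the coface of a free ridge, that ridge is interior (no face of $\partial C$ is ever removed in a collapse onto $\partial C$), and its second cofacet was removed strictly earlier, so the pairs do define a rooted spanning tree of the dual graph.

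The problem is that the step you yourself flag as ``the heart of the argument'' is precisely the main theorem of \cite{BZ}, and your proposal leaves it entirely unestablished, so what remains is a correct reduction, not a proof. Showing that the reversed lower-dimensional collapse can be reorganized into a sequence of ridge identifications, each occurring along a $(d-2)$-face that the two ridges already share \emph{in the current partial quotient of $\partial B$}, is a genuinely delicate induction: the collapse pairs $(\tau,\sigma)$ with $\dim\tau=d-2$ arrive in an order interleaving all dimensions, and one must verify that when the two copies of a ridge $\sigma$ are glued, the two copies of its paired face $\tau$ have already been identified --- this is not automatic from the collapse order, and the intermediate objects are in general only pseudomanifolds, which is why local constructibility has to be formulated at that level of generality. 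Without this lemma, your encoding degenerates to naming arbitrary partner faces at $\log_2 N$ bits per gluing, which, as you note, destroys the linear bit budget. A secondary, fixable issue: even granting the locality lemma, your accounting (roughly $(d+1)N$ boundary ridges at $1+O(\log d)$ bits each, plus the tree) exceeds $d^2 N$ bits for small $d$ (e.g.\ $d=2,3$), so to reach the stated constant $2^{d^2\cdot N}$ you would need the more careful count of \cite{BZ}, which encodes the gluing pattern by local matchings around codimension-two faces rather than step by step.
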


Not all triangulations of simplicial balls and spheres are endo-collapsible, cf.\ \cite[Thm.~3]{BZ}. An even stronger fact is that many triangulated spheres are still not endo-collapsible after two barycentric subdivisions \cite{Benedetti-DMT4MWB}. 
However, it turns out that all \emph{geometric} triangulations of spheres become endo-collapsible after (at most) two derived subdivisions:

\begin{lemma} [{Benedetti~\cite[Corollary~3.21]{Benedetti-DMT4MWB}}]
\label{lem:endcollapse}
Let $B$ be a collapsible triangulation of the $d$-ball. If $\sd \Lk (\sigma,B) $ is endo-collapsible for every face $\sigma$, then $\sd B$ is endo-collapsible. 
\end{lemma}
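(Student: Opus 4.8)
The plan is to translate the statement into discrete Morse theory and then build the required matching out of the two hypotheses. Endo-collapsibility of the $d$-ball $\sd B$ means precisely that $\sd B$ minus one interior facet $\Sigma$ collapses onto $\partial(\sd B)=\sd\partial B$, i.e.\ that $\sd B$ carries an acyclic matching whose only critical faces are $\Sigma$ together with all of $\sd\partial B$. The backbone of the argument is the join decomposition of links in a derived subdivision: for every face $\sigma$ of $B$, with barycenter $\hat\sigma$,
\[\Lk(\hat\sigma,\sd B)\;\cong\;\sd\partial\sigma\ast\sd\Lk(\sigma,B),\qquad \St(\hat\sigma,\sd B)\;\cong\;\hat\sigma\ast\bigl(\sd\partial\sigma\ast\sd\Lk(\sigma,B)\bigr).\]
In the right-hand factors, $\sd\partial\sigma$ is the derived subdivision of a simplex boundary, which is polytopal and hence endo-collapsible, while $\sd\Lk(\sigma,B)$ is endo-collapsible by hypothesis; a join lemma for endo-collapsibility then makes every interior vertex-link of $\sd B$ endo-collapsible. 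This is exactly what allows the local pieces to be collapsed in the correct, outward direction.

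A first observation frames the difficulty. Lifting a collapse $B\searrow w$ through the subdivision shows that $\sd B$ is \emph{itself} collapsible onto $\hat w$; but collapsibility is an inward collapse to a point, which is the Poincaré--Lefschetz \emph{dual} of the outward collapse onto $\sd\partial B$ that endo-collapsibility demands. The role of the link hypothesis is precisely to convert the inward collapse into the complementary outward one. Accordingly, I would fix a collapse $B\searrow w$, record the associated acyclic matching $M$ on the face poset of $B$ (all faces paired into free pairs except the critical vertex $w$), and use $M$ to order the faces of $B$. Over this order the matching on $\sd B$ is built blockwise: to each face $\sigma$ of $B$ one associates the portion of $\sd B$ governed by $\St(\hat\sigma,\sd B)$, and the endo-collapsing matchings of the two join factors $\sd\partial\sigma$ and $\sd\Lk(\sigma,B)$ are transported through the join to a matching of that block leaving an explicitly determined, small set of critical cells.

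For the global assembly I would invoke the Cluster (Patchwork) Lemma for discrete Morse theory, which states that the union of acyclic matchings on the fibers of an order-preserving map of posets is again acyclic. The natural map is the minimal-face map $\sd B\to B^{\mathrm{op}}$, whose fiber over $\sigma$ is the open star of $\hat\sigma$ and is controlled by $\sd\Lk(\sigma,B)$; feeding in the fiber matchings from the link endo-collapses guarantees acyclicity. The residual critical cells, now organized by the faces of $\sigma\in B$, are then cancelled in a second stage by running the collapse $B\searrow w$ on them: because $B$ is collapsible this second stage leaves a single interior facet $\Sigma$ critical, while the first stage, driven by endo-collapsibility of the links, has already pushed everything else onto $\sd\partial B$.

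The main obstacle is the boundary bookkeeping, and it is here that one must genuinely use endo-collapsibility of the links rather than mere collapsibility. Since the collapse $B\searrow w$ proceeds by peeling free faces off $\partial B$ and thus \emph{destroys} the boundary, whereas the target of the desired collapse is $\sd\partial B$, one has to check face by face that the outward collapse of each interior star is carried out \emph{relative to} its outer boundary, so that for every boundary face $\sigma\subset\partial B$ the contribution $\sd\partial\sigma$ is never consumed. Endo-collapsibility of each $\sd\Lk(\sigma,B)$ supplies exactly a local matching with a single critical top cell and with the link boundary left intact — the relative data needed for this to work; ordinary collapsibility would leave a stray critical vertex in each block, and these would fail to fuse. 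Verifying that the surviving critical faces are precisely $\Sigma$ and the faces of $\sd\partial B$, and that gluing the blocks along the collapse of $B$ creates no gradient cycle, is the technical heart of the argument, and it yields the collapse $\sd B\setminus\Sigma\searrow\sd\partial B$.
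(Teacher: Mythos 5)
First, a point of reference: the paper does not prove this lemma at all --- it is imported verbatim from Benedetti's earlier work [Ben12, Corollary~3.21] --- so your proposal can only be measured against that external proof. Your overall architecture is in fact the right one, and close in spirit to the actual argument: partition the faces of $\sd B$ outside $\sd \partial B$ into fibers of the order-preserving minimal-face map to $B^{\mathrm{op}}$, endow each fiber with an acyclic matching supplied by endo-collapsibility of $\sd \Lk(\sigma,B)$, and assemble via the Patchwork Lemma using the collapse $B\searrow w$. Two inaccuracies are worth flagging before the main issue. The fiber of the minimal-face map over $\sigma$ is \emph{not} the open star of $\hat\sigma$ (open stars do not even partition $\sd B$); it is the set of chains whose \emph{minimum} is $\sigma$, i.e.\ the faces of the dual block $\hat\sigma \ast \sd\Lk(\sigma,B)$ containing $\hat\sigma$ --- in particular only the link factor appears in a fiber, so your earlier description of the blocks via $\St(\hat\sigma,\sd B)\cong\hat\sigma\ast\sd\partial\sigma\ast\sd\Lk(\sigma,B)$, with matchings transported from \emph{both} join factors, is inconsistent with the fiber decomposition you later invoke. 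Relatedly, you never do the dimension bookkeeping: with the correct fibers, the fiber over $\sigma$ can be arranged to carry exactly one critical cell, of dimension $d-\dim\sigma$ (for boundary $\sigma$ this is precisely endo-collapsibility of the ball $\sd\Lk(\sigma,B)$; for interior $\sigma$ one extends the sphere endo-collapse by matching $\hat\sigma$ with the critical vertex). This dual dimension count is what makes critical cells of faces paired in the matching of $B$ sit in \emph{adjacent} dimensions; without it, ``cancelling the residual critical cells by running $B\searrow w$'' does not even typecheck, since critical cells of equal dimension can never cancel.

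The genuine gap is your second stage. Cancellation in discrete Morse theory is not a formal operation: to cancel the critical cells attached to a matched pair $(\sigma,\Sigma)$ of $B$ one needs a \emph{unique} gradient path of the stage-one matching between them, and whether such uniqueness holds depends on the fiber matchings, which for you are black boxes produced by the endo-collapsibility hypothesis. The alternative (and the honest route) is to avoid cancellation by constructing a \emph{perfect} acyclic matching on each pair of fibers merged along $(\sigma,\Sigma=\sigma\ast v)$; unwinding what this means, one must show that $\hat\sigma\ast\sd\Lk(\sigma,B)$ collapses onto the antistar of $\hat v$ in $\sd\Lk(\sigma,B)$, which in turn reduces to collapsibility of $\sd\bigl(\Lk(\sigma,B)-v\bigr)$ --- a statement that is \emph{not} among your hypotheses and requires its own induction on dimension together with cone, join, and union lemmas for endo-collapsibility. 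That infrastructure is exactly the content of Section~3 of Benedetti's paper, of which the quoted Corollary~3.21 is the culmination. You explicitly concede the point (``the technical heart of the argument'') and then do not supply it; since everything else in your outline is routine translation into matchings, what you have is a correct strategy with its core step asserted rather than proved.
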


 \begin{proposition} \label{prp:sphereendo}
Let $C$ denote a geometric triangulation of a convex subset of $S^d$. Then $\mathrm{sd}^2 C$ is endo-collapsible.
\end{proposition}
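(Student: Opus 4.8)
The plan is to feed $\operatorname{sd} C$ into the bootstrapping Lemma~\ref{lem:endcollapse}. We may assume $C$ is finite and full-dimensional (otherwise we work inside the subsphere spanned by $|C|$), so that $|C|$ is a convex $d$-ball lying in some open hemisphere of $S^d$; realize $C$ accordingly as a convex polytopal $d$-complex in $S^d$. The derived subdivision $\operatorname{sd} C$ is again such a complex, with the same convex underlying space. First I would check that $\operatorname{sd} C$ is a \emph{collapsible} $d$-ball: radially project $C$ to a combinatorially identical convex complex $C'$ in $\mathbb{R}^d$ and apply Theorem~\ref{thm:ConvexEndo}, which gives that $\operatorname{sd} C' \cong \operatorname{sd} C$ is collapsible. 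Thus $B := \operatorname{sd} C$ meets the hypotheses of Lemma~\ref{lem:endcollapse}, and it remains only to verify that $\operatorname{sd}\Lk(\sigma, \operatorname{sd} C)$ is endo-collapsible for every nonempty face $\sigma$ of $\operatorname{sd} C$; the lemma then yields that $\operatorname{sd}(\operatorname{sd} C) = \operatorname{sd}^2 C$ is endo-collapsible.

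The point is that each link $M := \Lk(\sigma, \operatorname{sd} C)$ is itself geometric: inside the normal sphere $\RN^1_{(p,\sigma)}\cong S^{m}$, with $m = d - \dim\sigma - 1$, the faces of $M$ are spherical simplices, so $M$ is a geometric triangulation of either a convex subset of $S^m$ (when $\sigma$ meets the boundary, since the link of a convex complex at a boundary face is convex) or of all of $S^m$ (when $\sigma$ is an interior face). In the convex case I would apply the spherical collapsing result Theorem~\ref{thm:ConvexEndo2}(C) to $M$: placing $M$ inside a generic closed hemisphere $\overline{H}_+$ produces a facet $F$ of $\operatorname{sd}\partial M$ with $\operatorname{sd} M \searrow \operatorname{sd}\partial M - F$. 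A short combinatorial reformulation then finishes this case: deleting from this collapse the single step that pairs $F$ with its unique adjacent facet $\Sigma$ re-routes it into an interior collapse $\operatorname{sd} M - \Sigma \searrow \partial(\operatorname{sd} M)$, which is exactly the endo-collapsibility of $\operatorname{sd} M$ demanded by Lemma~\ref{lem:endcollapse}.

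The main obstacle is the interior faces, where $|M| = S^m$ is a full sphere and hence not convex, so Theorem~\ref{thm:ConvexEndo2}(C) does not apply verbatim. Here I would run a hemisphere sweep: choose a generic linear height function on $S^m$, whose sublevel sets are spherically convex caps (with convex complementary caps past the equator), and use parts (A) and (B) of Theorem~\ref{thm:ConvexEndo2} to control the derived neighborhoods $N(\RS(\,\cdot\,,\overline{H}_+),\,\cdot\,)$ as the cap grows past each vertex; threading these local collapses together should leave exactly one top cell and one vertex critical, exhibiting $\operatorname{sd} M$ as endo-collapsible. This sweep---reducing the non-convex spherical case to the convex pieces handled by Theorem~\ref{thm:ConvexEndo2}---is the technical heart of the argument, everything else being combinatorial bookkeeping or a direct citation. (As a shortcut one may instead apply Theorem~\ref{thm:ConvexEndo2}(C) directly to $\operatorname{sd} C$, obtaining $\operatorname{sd}^2 C \searrow \operatorname{sd}^2\partial C - F$ and invoking the same reformulation; but the route through Lemma~\ref{lem:endcollapse} is the one suggested by the surrounding exposition, which announces that Theorem~\ref{thm:ConvexEndo2} will be applied to face links.)
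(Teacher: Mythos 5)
Your handling of the convex links is essentially sound: applying Theorem~\ref{thm:ConvexEndo2}(C) and then converting the collapse $\sd M \searrow \sd\partial M - F$ into $\sd M - \Sigma \searrow \partial(\sd M)$ by deleting the pair $(F,\Sigma)$ is a legitimate (and standard) reformulation, and it is the same implicit step the paper takes when it declares $\sd C$ endo-collapsible in the proof of Theorem~\ref{thm:stbound}. The genuine gap is exactly where you place the ``technical heart'': links of interior faces, which triangulate a whole sphere $S^m$. Your proposed hemisphere sweep cannot be executed with the tools you cite. Theorem~\ref{thm:ConvexEndo2} is a statement about a \emph{single} closed hemisphere in general position; the sublevel sets of a height function on $S^m$ are caps, not hemispheres, so neither (A) nor (B) applies to them, and no result in the paper lets you ``thread local collapses together'' into endo-collapsibility, which is a global matching condition, not a concatenation of local ones. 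Worse, what your route through Lemma~\ref{lem:endcollapse} actually requires for these links is that a \emph{single} derived subdivision $\sd M$ be endo-collapsible --- a statement strictly stronger than the proposition being proved (which only asserts this after \emph{two} subdivisions) --- so induction on dimension cannot close the argument either. Note also that your opening normalization is false: a convex subset of $S^d$ need not lie in any open hemisphere, since $S^d$ itself is convex; and the full sphere is precisely the case the paper needs (triangulations of $S^d$ in Theorem~\ref{thm:stbound}, and links of interior faces in Theorem~\ref{thm:DiscreteCheeger}), so it cannot be defined away.

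The paper's proof supplies the two ideas your sketch is missing, and it never sweeps. Instead of growing caps, it fixes one general-position hemisphere $H$ and its complement $H^c$ and covers $\sd C$ by the two derived neighborhoods $N^1(\RS(C,H),C)$ and $N^1(\RS(C,H^c),C)$; by Theorem~\ref{thm:ConvexEndo2} both are collapsible, and this works uniformly whether $|C|$ is a proper convex set or all of $S^d$. It then applies Lemma~\ref{lem:endcollapse} to each of these two balls (not to $\sd C$ itself), obtaining that $\sd N^1(\RS(C,H),C)$ and $\sd N^1(\RS(C,H^c),C)$ are endo-collapsible; finally, these two complexes cover $\sd^2 C$ and meet only along a codimension-one subcomplex of their boundaries, so $\sd^2 C$ is endo-collapsible by the gluing results for endo-collapsible manifolds of \cite{Benedetti-DMT4MWB} (this is the content of the paper's concluding ``Thus''). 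The decomposition into exactly two collapsible pieces plus a gluing theorem is what replaces your sweep; the fix is not local to the sphere-link case but requires restructuring the proof along these lines.
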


\begin{proof}
Let $H$ denote a general position hemisphere of $S^d$, and let $H^c$ denote the complement. Then by Theorem~\ref{thm:ConvexEndo2}, $N^1(\RS(C,H),C)$ and $N^1(\RS(C,H^c),C)$ are both collapsible. By Lemma~\ref{lem:endcollapse}, $\sd N^1(\RS(C,H),C)$ and $\sd N^1(\RS(C,H^c),C)$ are endo-collapsible. Thus 
$\mathrm{sd}^2 C$ is endo-collapsible. 
\end{proof}

Using the bound from Theorem~\ref{thm:ENDOexpbound}, we can achieve three non-trivial exponential bounds. 

\begin{lemma}[{following Bayer \cite[Theorem 3]{Bayer}}] \label{lem:bayer}
Let $A$ and $B$ be two triangulations of the same manifold. If $\sd A= \sd B$, then $A=B$.
\end{lemma}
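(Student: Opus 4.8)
The plan is to show that the barycentric subdivision operation is injective on combinatorial types, which is exactly what the lemma asserts once one recalls that all derived subdivisions of a complex are combinatorially the barycentric one. So I would first reformulate the hypothesis $\sd A=\sd B$ as an isomorphism of order complexes. Indeed, $\sd C$ is combinatorially the order complex of the face poset $(C,\subseteq)$: its vertices are the faces of $C$, and its simplices are the chains $F_0\subsetneq F_1 \subsetneq\cdots\subsetneq F_k$ of faces. Thus $\sd A=\sd B$ means the order complexes of $(A,\subseteq)$ and $(B,\subseteq)$ are isomorphic, and it suffices to prove that the face poset $(C,\subseteq)$ can be reconstructed from its order complex: for once the poset is known, $C$ itself is recovered canonically, since the atoms of $(C,\subseteq)$ are exactly the vertices of $C$ and each face $F$ is the set of atoms below it (distinct faces of a simplicial complex have distinct vertex sets, so this recovery is faithful).

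The reconstruction of the poset from its order complex proceeds in two steps. First, the $1$-skeleton of $\sd C$ is the comparability graph of $(C,\subseteq)$: two barycenters $\hat F,\hat G$ span an edge precisely when $F\subseteq G$ or $G\subseteq F$. So from $\sd C$ I recover which pairs of faces are comparable, and it remains to orient each such comparison, i.e.\ to recover a rank function with $\mathrm{rk}(\hat F)=\dim F$. The tool I would use is the rigidity of lower intervals: for any face $F$, the closed interval $\{G:G\subseteq F\}$ is the Boolean lattice on $\dim F+1$ atoms, whose order complex is $\sd \partial\Delta^{\dim F}$, a combinatorial $(\dim F-1)$-sphere. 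Reading off, for each vertex $\hat F$ of $\sd C$, the largest subcomplex of its link that is a subdivided simplex boundary of this shape pins down $\dim F$, and simultaneously separates the ``downward'' neighbors of $\hat F$ (those $\hat G$ with $G\subsetneq F$, whose barycenters together with $\hat F$ span the corresponding Boolean order complex) from the ``upward'' ones. Orienting every edge this way recovers $(C,\subseteq)$, and hence $A\cong B$.

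The step I expect to be the main obstacle is exactly this orientation. The order complex of a poset coincides with that of its opposite poset, so abstractly $(C,\subseteq)$ is determined by $\sd C$ only up to duality; the genuine content of the lemma is breaking this symmetry. What breaks it is the structural asymmetry special to face posets --- every principal lower ideal is Boolean, whereas the upper ideals (the face links) are not Boolean in general --- and the care lies in phrasing this asymmetry as a purely combinatorial rule on $\sd C$ that orients each edge and in checking that the resulting orientation is globally consistent (transitive). In particular I would verify that a vertex $\hat F$ is the barycenter of an actual vertex of $C$ exactly when its link carries no subdivided simplex-boundary factor, i.e.\ when $\partial F=\emptyset$; identifying these atom-barycenters is the crux, after which the faithful reconstruction of the previous paragraph applies verbatim.
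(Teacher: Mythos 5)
Your high-level strategy --- reconstruct the face poset from its order complex, and break the poset/opposite-poset symmetry using the fact that lower intervals of face posets are Boolean --- is the same in spirit as Bayer's argument that the paper follows, and the reduction ``once the atoms are identified, each face is recovered as its set of atoms'' is sound. But the concrete rule you propose at what you yourself call the crux is false, and it fails on very ordinary manifold triangulations. Take the stacked tetrahedron $C$ (the $5$-vertex $2$-sphere: glue two tetrahedra along a triangle, so the two apexes $4,5$ have degree $3$). For a degree-$3$ vertex $v$ one has $\Lk(v,C)=\partial\Delta^2$, hence $\Lk(\hat v,\sd C)=\sd \Lk(v,C)=\sd \partial\Delta^2$, a hexagon --- exactly the same as the link of the barycenter of a \emph{facet}. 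So the link of $\hat v$ does ``carry a subdivided simplex-boundary factor'' even though $\partial v=\emptyset$, and your atom-detection criterion misclassifies $v$; likewise the ``largest subdivided simplex boundary'' in this link is the whole link, all of whose vertices are \emph{upward} neighbours of $\hat v$, so the dimension/orientation rule fails too. Worse, the failure is not reparable by tweaking the rule: in this $C$ (which is not self-dual: $f_0=5$, $f_2=6$) the barycenters of the vertices $4,5$ and of all six facets have isomorphic links in $\sd C$, so \emph{no} criterion depending only on the isomorphism type of $\Lk(\hat F,\sd C)$ can single out the vertex-barycenters. Concretely, your procedure would detect only $\{1,2,3\}$ as atoms, after which the edge $12$ and the facets $124,125$ all receive the same atom set $\{1,2\}$ and the reconstruction collapses. (A smaller slip: the order complex of the closed interval $\{G:G\subseteq F\}$ is the ball $\sd F=\hat F\ast\sd\partial F$; it is the \emph{open} interval that gives the sphere $\sd \partial\Delta^{\dim F}$.)

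The paper (following Bayer) gets around exactly this obstruction by making the local rule weaker and the global argument stronger. Locally, it uses the coloring of $\sd T$ by dimension: $\hat F$ is declared ``vertex-or-facet'' iff $\Lk(\hat F,\sd T)$ admits no nontrivial decomposition as a join of subcomplexes \emph{induced by color classes}. This survives your counterexample: the hexagonal link has two color classes of three vertices each, and their join is $K_{3,3}$, not the hexagon, so the decomposition is trivial and $\hat v$ is correctly placed in the vertex-or-facet class; note the rule does \emph{not} attempt to decide which of the two it is. That residual decision is precisely the duality ambiguity, and it is resolved globally: the vertex-or-facet barycenters span a bipartite graph, choosing which side is ``vertices'' produces either $T$ or its dual (with links recovered by induction on dimension), and of two dual complexes only one can be simplicial. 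So the vertex/facet distinction your argument needs to make locally is genuinely impossible to make locally; it can only be made up to a global two-fold choice, which must then be killed by the simpliciality assumption. As written, your proof has a real gap at this step.
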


\begin{proof} We make the stronger claim that the barycentric subdivision of any \emph{polytopal} complex $T$ determines $T$ up to duality. The claim immediately implies the conclusion, because if two complexes are dual to one another, only one of them can be simplicial. To prove our claim, we proceed by induction on $\dim T$. Without loss of generality, assume $T$ is connected. Also, assume $T$ is not a single simplex. Following Bayer \cite{Bayer}, consider a minimal coloring of the complex $\sd T$. Choose a vertex $v$ of $\sd T$. Decompose $\Lk(v, \sd T)$ minimally into color classes so that $\Lk(v,\sd T)$ is a join of the complexes induced by these color classes. If this decomposition is trivial, $v$ corresponds to either a vertex or a facet of $T$. 
List all the vertices with trivial decomposition. By the assumption, the induced complex on these vertices is a 
2-colorable graph with an edge, with endpoints $a$ and $b$. If $a$ is a 
vertex of $T$, then $b$ must correspond to a facet of $T$. By inductive assumption we can determine $\Lk(a,T)$ from $\Lk(a,\sd T)$; in particular, $T$ is determined by $\sd T$. The other option is if $a$ is a facet; in this case $b$ is a vertex of $T$ and again by induction we can determine $\Lk(b,T)$ from $\Lk(b,\sd T)$. Hence, there are only two options, which 
are dual to one another.
\end{proof}

\begin{theorem}\label{thm:stbound} In terms of the number $N$ of facets, there are: \begin{compactitem}[$\bullet$]
\item less than $2^{d^2\cdot (d+1)! \cdot N}$ geometric triangulations of convex balls in $\R^d$,
\item less than $2^{d^2\cdot ((d+1)!)^2 \cdot N}$ geometric triangulations of $S^d$, 
\item less than $2^{d^2\cdot ((d+1)!)^{(d-2)} \cdot N}$ star-shaped balls in $\R^d$.
\end{compactitem}
\end{theorem}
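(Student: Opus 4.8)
The three bounds share a single engine: reduce each class to the endo-collapsible count of Theorem~\ref{thm:ENDOexpbound} by passing to derived subdivisions. The plan is to prove, for a geometric triangulation $C$ with $N$ facets in each of the three classes, that $\sd^k C$ is endo-collapsible for a suitable $k=k(d)$, namely $k=1$ for convex balls, $k=2$ for spheres, and $k=d-2$ for star-shaped balls. Granting this, the counting is immediate. The barycentric subdivision of a single $d$-simplex has $(d+1)!$ facets (one per vertex ordering), so $\sd^k C$ is an endo-collapsible triangulation with exactly $N\cdot((d+1)!)^k$ facets. By Lemma~\ref{lem:bayer} the operator $\sd^k$ is injective on simplicial complexes: if $\sd^k A=\sd^k B$, then applying the lemma $k$ times (to the intermediate subdivisions, which triangulate the same manifold) gives $A=B$. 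Hence $C\mapsto \sd^k C$ injects the class under consideration into the set of endo-collapsible triangulations with $N\cdot((d+1)!)^k$ facets, and Theorem~\ref{thm:ENDOexpbound} bounds the latter by $2^{d^2\cdot N\cdot((d+1)!)^k}$. The inclusion is proper (not every endo-collapsible triangulation is a $k$-th derived subdivision), which accounts for the strict ``less than''; substituting $k=1,2,d-2$ produces the three stated bounds.

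For \emph{convex balls} I would take $k=1$. Radially project the convex ball to a convex spherical $d$-complex $C'$ contained in an open hemisphere, and choose a closed hemisphere $\overline{H}_+$ containing it whose bounding equator misses $C'$ entirely, so that $\overline{H}_+$ is in general position. Case~(C) of Theorem~\ref{thm:ConvexEndo2} then gives $\sd C'\searrow \sd\partial C'-F$ for some facet $F$ of $\sd\partial C'$, and since radial projection is a combinatorial isomorphism this reads $\sd C\searrow \sd\partial C-F$. It remains to observe that a collapse $B\searrow \partial B-F$ of a $d$-ball forces $B$ to be endo-collapsible: deleting the matched pair $(F,\Sigma)$ from the acyclic matching realizing this collapse (where $\Sigma$ is the $d$-face absorbing $F$) leaves an acyclic matching on $B$ whose critical cells are exactly $\{\Sigma\}\cup\partial B$; restricting it to $B-\Sigma$ yields an acyclic matching with critical set precisely the subcomplex $\partial B$, that is, $B-\Sigma\searrow \partial B$. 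Applied to $B=\sd C$ this shows $\sd C$ is endo-collapsible.

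For \emph{spheres} I would take $k=2$ and run the argument of Proposition~\ref{prp:sphereendo} on $S^d$ itself: a general-position equator splits $S^d$ into two closed hemispheres $\overline{H}_+,\overline{H}_-$; the derived neighborhoods $N^1(\RS(C,\overline{H}_+),C)$ and $N^1(\RS(C,\overline{H}_-),C)$ are collapsible by cases (A)/(B) of Theorem~\ref{thm:ConvexEndo2}, Lemma~\ref{lem:endcollapse} upgrades their subdivisions to endo-collapsible balls, and assembling the two halves shows that $\sd^2 C$ is endo-collapsible.

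The \emph{star-shaped} case, with its exponent $d-2$, is where the real work lies, and I expect it to be the main obstacle. Here I would argue by induction on $d$, the base case $d=2$ being free since every triangulated disk is already endo-collapsible, matching $((d+1)!)^0=1$. For the inductive step one applies Lemma~\ref{lem:endcollapse} to a derived subdivision of $C$, so one must establish that a high enough subdivision of $C$ is collapsible and that the links $\sd\Lk(\sigma,\cdot)$ are endo-collapsible. The links of a star-shaped $d$-ball are lower-dimensional spheres and balls, to which one hopes to apply the inductive hypothesis once their star-shapedness with respect to the induced center is verified; each descent in dimension is meant to consume a further derived subdivision, and it is this accumulation that should produce $k=d-2$. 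The two delicate points I would have to treat with care are, first, that star-shapedness is genuinely inherited by the relevant links, and second, that the subdivision counts entering Lemma~\ref{lem:endcollapse} compose to \emph{exactly} $d-2$ rather than a larger number, since Lemma~\ref{lem:endcollapse} only feeds on a \emph{single} derived subdivision of each link; reconciling this with the $d-3$ subdivisions the inductive hypothesis supplies for the links is the crux of the bookkeeping.
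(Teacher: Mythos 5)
Your counting skeleton is exactly the paper's: show that $\sd^k C$ is endo-collapsible for $k=1,2,d-2$ respectively, note that $\sd^k C$ has $((d+1)!)^k N$ facets, bound the possibilities via Theorem~\ref{thm:ENDOexpbound}, and recover $C$ from $\sd^k C$ by iterating Lemma~\ref{lem:bayer}. For the first two bullets your route coincides with the paper's (Theorem~\ref{thm:ConvexEndo2}(C) for convex balls, the argument of Proposition~\ref{prp:sphereendo} for spheres) and is correct. In fact your matching-surgery step is a useful addition: the paper reads Theorem~\ref{thm:ConvexEndo2}(C), which only yields $\sd C \searrow \sd \partial C - F$, directly as ``$\sd C$ is endo-collapsible,'' and your argument --- delete the pair $(F,\Sigma)$ from the acyclic matching realizing the collapse, observe that the restriction to $\sd C - \Sigma$ is still acyclic with critical cells exactly the subcomplex $\sd\partial C$, hence $\sd C - \Sigma \searrow \sd \partial C$ --- is precisely the bridge needed, and it is sound (removing matched pairs preserves acyclicity, and an acyclic matching whose critical cells form a subcomplex induces a collapse onto that subcomplex).

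The genuine gap is the third bullet, and you have diagnosed it accurately yourself. To be fair, the paper is no better here: it claims the star-shaped case ``can be proven analogously,'' but nothing in the paper produces endo-collapsibility of $\sd^{d-2}C$ for star-shaped $C$ --- star-shaped sets are not convex, so neither Theorem~\ref{thm:ConvexEndo2} nor Proposition~\ref{prp:sphereendo} applies. The actual missing ingredient is an external theorem from the authors' earlier work on collapsibility (the resolution of Goodrick's conjecture: complexes star-shaped in $\R^d$ become collapsible, in a form strong enough to give endo-collapsibility, after $d-2$ derived subdivisions). Your proposed induction cannot substitute for it, for exactly the reason you flag: Lemma~\ref{lem:endcollapse} requires $\sd \Lk(\sigma,B)$ --- one subdivision of each link --- to be endo-collapsible, but the links of interior faces are spheres, for which your own machinery (Proposition~\ref{prp:sphereendo}) only delivers endo-collapsibility after \emph{two} subdivisions; so a naive dimension-descending induction overshoots and cannot land on the exponent $d-2$. (The way this is actually overcome in the literature is by exploiting the special structure of links inside derived subdivisions, e.g.\ $\Lk(v_F,\sd B)\cong \sd\,\partial F \ast \sd \Lk(F,B)$, together with a sweep argument --- bookkeeping genuinely different from what you sketched.) So: bullets one and two are proved, bullet three is not; the paper's own proof shares this hole, but a complete argument would have to import the star-shaped collapsibility theorem explicitly.
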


\begin{proof}
We only prove the first item here, the other ones can be proven analogously. Let $C$ be any simplicial subdivision of a convex $d$-polytope. By Theorem~\ref{thm:ConvexEndo2}(C), the derived subdivision $\sd C$ is endo-collapsible. Furthermore, if $C$ has $N$ facets, then $\sd C$ has $(d+1)!\cdot N$ facets. Hence, by Theorem~\ref{thm:ENDOexpbound}, $\sd C$ is one of at most $2^{d^2\cdot (d+1)! \cdot N}$ combinatorial types. Since  simplicial complexes with isomorphic derived subdivisions are isomorphic by Lemma \ref{lem:bayer}, we conclude that $C$ is one of at most $2^{d^2\cdot (d+1)! \cdot N}$ combinatorial types.
\end{proof}

\section{Triangulated space forms with bounded geometry}
In this section, we wish to study \emph{space forms}, which are Riemannian manifolds of constant sectional curvature \cite{CheegerComp}. We focus on space forms with ``bounded geometry'', with the goal of establishing an exponential upper bound for the number of triangulations. The following Lemma is well known:

\begin{lemma} \label{lem:lowerbound}
Let $M$ be a space form of dimension $d \ge 2$. There are \emph{at least} exponentially many triangulations of $M$. 
\end{lemma}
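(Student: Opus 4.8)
The plan is to produce, for every large $N$, at least $2^{cN}$ pairwise non-isomorphic triangulations of $M$ with at most $N$ facets, by performing independent local modifications on a fixed background triangulation. The metric plays no role in a lower bound of this kind, so I would argue purely combinatorially; the complexes produced can afterwards be realized geometrically by placing each new vertex at a barycenter of the geodesic simplex it subdivides.

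The binary gadget is \emph{stacking}. Fix a triangulation $T$ of $M$ and recall that stacking a facet $\sigma$ means inserting a new vertex $v_\sigma$ in the interior of $\sigma$ and replacing $\sigma$ by the $d+1$ simplices $v_\sigma \ast G$, $G \in \partial \sigma$. This leaves $\partial \sigma$, and hence the rest of $T$, untouched, so stacking can be carried out independently on an arbitrary subset $S$ of the facets of $T$. Writing $T_S$ for the result, each $T_S$ is again a triangulation of $M$, and $T_S$ has exactly $|\mathrm{facets}(T)| + d\,|S|$ facets; if $T$ has $F$ facets, then every $T_S$ has at most $(d+1)F$ facets.

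The crux is distinctness: I want the $2^F$ complexes $\{T_S\}$ to realize $2^{\Omega(F)}$ isomorphism types. The key point is that $S$ can be recovered from $T_S$ up to automorphisms of $T$. Indeed the stacking vertices form an independent set, and each is characterized intrinsically (an interior vertex of degree $d+1$ whose link is a $\partial \Delta^d$ spanned by original vertices, removable by an inverse stacking); deleting all removable vertices of $T_S$ therefore returns $T$ together with the marked set $S$, provided $T$ itself has no removable vertices and minimum degree exceeding $d+1$. Consequently any isomorphism $T_{S_1} \cong T_{S_2}$ descends to an automorphism of $T$ sending $S_1$ to $S_2$. This is exactly where the \emph{main obstacle} lies: the order of $\mathrm{Aut}(T)$ a priori grows with $F$, and could identify many subsets. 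I resolve this by choosing the background $T$ to be asymmetric (trivial automorphism group) as well as stacking-reduced and of high minimum degree — all easy genericity conditions, arrangeable by a local symmetry-breaking subdivision, and satisfiable with arbitrarily many facets. With $\mathrm{Aut}(T) = 1$, distinct subsets give non-isomorphic triangulations, so the $T_S$ realize exactly $2^F$ isomorphism types.

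Finally, the count. Given $N$, I would select such an asymmetric, stacking-reduced triangulation $T$ of $M$ with $F = \Theta(N)$ facets, chosen so that $F \ge cN$ for a constant $c = c(d) > 0$ while $(d+1)F \le N$. Every $T_S$ then has at most $N$ facets, and there are at least $2^F \ge 2^{cN} = \bigl(2^{c}\bigr)^{N}$ of them up to isomorphism, giving the desired exponential lower bound. The only point demanding genuine care is the automorphism/recoverability bookkeeping of the distinctness step; once the background triangulation is chosen generically, the remaining verifications are routine.
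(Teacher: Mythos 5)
Your proposal is correct in substance, and it is a genuinely different argument from the paper's. The paper states the lemma as ``well known'' and gives no proof in the body at all; the construction the authors had in mind (visible only in a commented-out appendix of the source) is two-dimensional in spirit: one re-triangulates a subdivided square, without new vertices, in Catalan-many ($\approx 4^N$) ways and glues it into the manifold, as done there explicitly for the torus. Your stacking argument is self-contained, works uniformly in every dimension $d\ge 2$, and uses nothing about space forms beyond the existence of a single background triangulation, so it in fact proves the stronger statement that \emph{every} closed $d$-manifold, $d\ge2$, has exponentially many triangulations. Your realizability remark is also sound: in constant curvature, geodesics are straight lines in the Klein (resp.\ gnomonic) chart, so stacking a geodesic simplex at its barycenter again produces convex geodesic simplices, and the lower bound applies to geometric triangulations, matching the paper's upper bound.

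The one soft spot is the step you yourself flag: the existence of an asymmetric, stacking-reduced background triangulation of minimum degree exceeding $d+1$ is asserted as ``easy genericity'' but not proved, and symmetry-breaking subdivisions do require a real argument. Fortunately you do not need asymmetry at all. In a triangulated closed manifold (indeed in any strongly connected pseudomanifold), an automorphism is determined by its action on a single facet: once the image of one facet and the bijection on its vertices are fixed, the map propagates uniquely across ridges, since each ridge lies in exactly two facets. Hence $|\mathrm{Aut}(T)|\le F\cdot(d+1)!$ where $F$ is the number of facets. Your recovery argument shows that each isomorphism class among the $T_S$ is an orbit of $\mathrm{Aut}(T)$ acting on subsets of facets, so the number of classes is at least $2^F/\bigl(F\cdot(d+1)!\bigr)$, which is still exponential in $F$; the constant in the exponent only degrades negligibly. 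With this replacement (keeping your minimum-degree hypothesis, which correctly guarantees that the degree-$(d+1)$ vertices of $T_S$ are exactly the stacking vertices, since stacking never decreases degrees of old vertices), your proof is complete and needs no unproved genericity claims.
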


This motivates the search for an upper bound to the number of such geometric triangulations. We will show that Lemma~\ref{lem:lowerbound} is best possible, in the sense that these triangulations are also \emph{at most} exponentially many (Theorem~\ref{thm:DiscreteCheeger}). 

Our idea is to chop a geometric triangulation of a space form with bounded geometry into a bounded number of endo-collapsible balls. The key for this is given by the following two lemmas: One is Cheeger's bound on the injectivity radius, the other a direct consequence of Toponogov's theorem.

\begin{lemma}[Cheeger~\cite{CheegerCrit}]
Let $-\infty<k<\infty$ and $D,V>0$. There exists a positive number $\widetilde{\mathcal{C}}(k,D,V)>0$ such that every Riemannian $d$-manifold with 
curvature $\ge k$, diameter $\le D$ and volume $\ge V$, 
has no closed geodesic of length less than $\widetilde{\mathcal{C}}(k,D,V)$.
\end{lemma}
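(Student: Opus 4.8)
The plan is to reduce the statement to a lower bound on the injectivity radius. Since the length of any shortest closed geodesic is at least twice the injectivity radius, it suffices to produce a uniform $\operatorname{inj}(M) \ge i_0(k,D,V) > 0$, and then take $\widetilde{\mathcal{C}}(k,D,V) = 2 i_0$. I would argue by contradiction. If no such bound existed, there would be a sequence of Riemannian $d$-manifolds $M_i$, each with sectional curvature $K \ge k$, diameter $\le D$, and volume $\ge V$, but carrying closed geodesics $\gamma_i$ of length $\ell_i \to 0$, so that $\operatorname{inj}(M_i) \to 0$. The whole point is to show that the three hypotheses are incompatible with this degeneration.

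The first step is Gromov's precompactness theorem. The lower bound $K \ge k$ feeds into Bishop--Gromov volume comparison, which gives a uniform upper bound on the number of disjoint $\e$-balls inside any ball of radius $D$; combined with $\operatorname{diam}(M_i) \le D$, this makes $\{M_i\}$ precompact in the Gromov--Hausdorff topology. Passing to a subsequence, $M_i \to X$ with $X$ a compact Alexandrov space of curvature $\ge k$ and diameter $\le D$. Here the volume hypothesis does essential work: because $\operatorname{vol}(M_i) \ge V > 0$ uniformly, the convergence is \emph{non-collapsing}, so $X$ has Hausdorff dimension exactly $d$ with $\mathcal{H}^d(X) \ge V$ (a drop in dimension would force $\operatorname{vol}(M_i) \to 0$, contradicting $V > 0$). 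One then has to see that a family of closed geodesics with $\ell_i \to 0$ cannot survive in such a non-collapsed limit: a shrinking closed geodesic pinches off a thin neck, and ruling this out is exactly where the lower curvature bound bites, since an arbitrarily thin neck would force sectional curvatures below $k$ (quantitatively, via Toponogov comparison applied to the thin geodesic bigon formed by the two halves of $\gamma_i$).

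I expect the analysis of this limiting degeneration to be the main obstacle: one must convert ``short closed geodesic'' into a genuine metric contradiction, controlling both the contractible case (where a short geodesic loop wants to bound a small disk) and the non-contractible case (a shrinking essential loop). For the paper's application this is cleaner, because a \emph{space form} has \emph{constant} curvature and hence a two-sided bound $|K| \le |k|$. In that regime one can bypass the compactness machinery and run Cheeger's original estimate directly: the upper curvature bound forces the conjugate radius to be at least $\pi/\sqrt{|k|}$, so if the shortest closed geodesic were shorter than this, Klingenberg's lemma would locate it as a geodesic loop of length $2\operatorname{inj}(M)$ surrounded by an embedded normal tube. The two-sided bound then pins the volume of that tube (and of the maximal embedded ball) through Günther's and Bishop--Gromov's inequalities, and comparing this against $\operatorname{vol}(M) \ge V$ and $\operatorname{diam}(M) \le D$ yields the desired positive lower bound on $\operatorname{inj}(M)$, hence on $\widetilde{\mathcal{C}}(k,D,V)$.
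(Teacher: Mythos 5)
Your argument has a genuine gap at exactly the step that carries all the content: ruling out the shrinking closed geodesics $\gamma_i$ in the non-collapsed limit. The mechanism you offer --- a shrinking closed geodesic ``pinches off a thin neck,'' and an arbitrarily thin neck would force sectional curvature below $k$ via Toponogov applied to the bigon formed by the two halves of $\gamma_i$ --- cannot be right, because it never uses the volume hypothesis. Without the volume bound the lemma is simply false: the flat torus $S^1_\e \times T^{d-1}$ (with unit circle factors in $T^{d-1}$) has $K \equiv 0 \ge k$, diameter $\le D$, and closed geodesics of length $\e \to 0$, yet there is no neck and no curvature violation anywhere; only the volume tends to $0$. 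Accordingly, no Toponogov argument on the bigon alone can produce a contradiction: the two halves of a closed geodesic meet at angle $\pi$ at both endpoints, the comparison bigon is degenerate with angle $0$, and Toponogov's inequality $\pi \ge 0$ is vacuous. A short closed geodesic is perfectly compatible with $K \ge k$ and $\mathrm{diam} \le D$; what it forces is \emph{small volume}, so any correct proof must couple the geodesic to the volume bound. Your Gromov--Hausdorff setup does record the volume bound (as non-collapsing of the limit $X$), but the geodesics $\gamma_i$ merely converge to a point of $X$, which is not in itself contradictory; extracting a contradiction would require a quantitative local estimate on $M_i$ near that point --- i.e., essentially the lemma itself --- plus heavy machinery (volume convergence, Perelman stability), none of which is supplied.

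Two further remarks. First, your opening reduction (``it suffices to bound the injectivity radius below'') aims at a statement that is \emph{false} under the lemma's hypotheses: with only a lower curvature bound, conjugate points can make $\operatorname{inj}(M)$ arbitrarily small --- e.g., a convex surface with a tiny region of enormous positive curvature has $K \ge 0$, bounded diameter and area, but injectivity radius tending to $0$. This is precisely why Cheeger's lemma speaks of closed geodesics, and why the paper's subsequent Corollary invokes Klingenberg's estimate together with the two-sided curvature bound available for space forms; your last paragraph handles that case correctly, but it proves the Corollary, not this lemma. Second, note that the paper gives no proof here (it cites Cheeger), and the standard proof is direct, needing no compactness or contradiction: if $\gamma$ is a closed geodesic of length $\ell$, every point of $M$ is joined to $\gamma$ by a minimizing segment of length $\le D$ meeting $\gamma$ orthogonally, so the normal exponential map of $\gamma$ restricted to radius $D$ is onto $M$; Rauch/Heintze--Karcher comparison using the lower curvature bound (Ricci $\ge (d-1)k$ suffices) bounds its Jacobian, giving $V \le \operatorname{vol}(M) \le \ell \cdot c(d,k,D)$, whence $\ell \ge V/c(d,k,D) =: \widetilde{\mathcal{C}}(k,D,V)$.
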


A well-known result by Klingenberg is that the injectivity radius is larger than the minimum of half the length of the shortest closed geodesic and $\sqrt{K}$, where $K$ is the supremum of sectional curvatures on the Riemannian manifold in question. From this we conclude:

\begin{cor}
Let $-\infty<k<\infty$ and $D,V>0$. There exists an integer $\mathcal{C}(k,D,V)>0$ such that every $d$-dimensional space form with 
curvature $\ge k$, diameter $\le D$ and volume $\ge V$ has injectivity radius at least $\mathcal{C}(k,D,V)$.
\end{cor}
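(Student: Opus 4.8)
The plan is to obtain the bound by combining the two facts quoted immediately above, since the corollary is essentially their juxtaposition once one extra ingredient — an \emph{upper} bound on the curvature — is supplied. Recall Klingenberg's estimate in its standard form: for a compact Riemannian manifold $M$ one has
\[ \operatorname{inj}(M)\ \ge\ \min\Bigl\{\tfrac{\pi}{\sqrt{K}},\ \tfrac12\,\ell(M)\Bigr\}, \]
where $\ell(M)$ is the length of the shortest closed geodesic and $K$ is the supremum of the sectional curvatures (the first term being vacuous when $K\le 0$). A space form of diameter $\le D$ is complete and compact, so the estimate applies. First I would dispatch the second term using Cheeger's Lemma directly: because $M$ has curvature $\ge k$, diameter $\le D$ and volume $\ge V$, the shortest closed geodesic satisfies $\ell(M)\ge \widetilde{\mathcal{C}}(k,D,V)$, whence $\tfrac12\,\ell(M)\ge \tfrac12\,\widetilde{\mathcal{C}}(k,D,V)>0$.

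The remaining step — and the only one that is not a verbatim citation — is to bound the first term away from zero, i.e.\ to bound $K$ from above. Here I would exploit that a space form has \emph{constant} curvature $K=\kappa$, and that the hypotheses furnish a lower bound on volume. If $\kappa\le 0$ the term $\pi/\sqrt{K}$ is vacuous and $\operatorname{inj}(M)\ge \tfrac12\,\widetilde{\mathcal{C}}(k,D,V)$ already. If $\kappa>0$, the universal cover of $M$ is the round sphere of radius $1/\sqrt{\kappa}$, of volume $\omega_d\,\kappa^{-d/2}$ with $\omega_d=\operatorname{vol}S^d$; since $M$ is the quotient of this sphere by a finite group, $V\le \operatorname{vol}(M)\le \omega_d\,\kappa^{-d/2}$, so that $\kappa\le (\omega_d/V)^{2/d}$ and therefore $\pi/\sqrt{\kappa}\ge \pi\,(V/\omega_d)^{1/d}>0$. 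This is precisely where the volume lower bound does its real work: the hypotheses only give $\kappa\ge k$, so the positive-curvature case is where an a priori \emph{upper} curvature bound must be manufactured, and I expect this to be the main (indeed the only) obstacle.

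Combining the two estimates yields
\[ \operatorname{inj}(M)\ \ge\ \min\Bigl\{\pi\,(V/\omega_d)^{1/d},\ \tfrac12\,\widetilde{\mathcal{C}}(k,D,V)\Bigr\}\ =:\ \mathcal{C}(k,D,V)\ >\ 0, \]
a strictly positive quantity depending only on $k$, $D$, $V$ and the fixed dimension $d$, and uniform over the whole class of such space forms. Taking $\mathcal{C}(k,D,V)$ to be this constant (or its integer part, should a discrete value be preferred) gives the asserted lower bound. The only delicate point is the uniform positivity of the minimum, which is exactly what the Cheeger and Klingenberg inputs, together with the volume-to-curvature estimate above, guarantee.
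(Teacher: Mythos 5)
Your proposal is correct and follows the same route as the paper, which likewise derives the corollary by combining Klingenberg's estimate $\operatorname{inj}(M)\ge\min\{\pi/\sqrt{K},\tfrac12\ell(M)\}$ with Cheeger's lower bound on the length of the shortest closed geodesic. In fact you are somewhat more careful than the paper's terse ``from this we conclude'': the step where the volume lower bound forces an upper bound on the (constant) curvature in the spherical case, so that the $\pi/\sqrt{K}$ term stays bounded away from zero, is left implicit in the paper, and your argument via $V\le\omega_d\,\kappa^{-d/2}$ supplies exactly that missing detail.
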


Finally, we need a lemma to cover a Riemannian manifold by disks.

\begin{lemma}\label{lem:finitecover}
Let $k, D$ be real numbers, with $D>0$. Let $d$ be a positive integer. For every $\e>0$, there exists a positive integer $N_\e$ such that every Riemannian $d$-manifold with curvature bounded below by $k$ and diameter at most $D$ can be covered with at most $N_\e$ balls of radius $\e$.
\end{lemma}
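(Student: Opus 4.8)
The plan is to produce a uniform bound on the \emph{packing number} of $M$ via the Bishop--Gromov volume comparison theorem, which is precisely the tool that converts a lower curvature bound into an upper bound on the volume growth of metric balls.

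First I would fix a maximal $\e$-separated subset $\{p_1, \dots, p_m\}$ of $M$, i.e.\ a maximal collection of points that are pairwise at distance at least $\e$. Maximality guarantees that every point of $M$ lies within distance $\e$ of some $p_i$, so the balls $B(p_i, \e)$ already cover $M$; it therefore suffices to bound $m$ from above, uniformly in $M$. On the other hand, the $\e$-separation guarantees that the smaller balls $B(p_i, \e/2)$ are pairwise disjoint, and all of them are contained in $M$.

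The key step is volume comparison. Since sectional curvature $\ge k$ implies Ricci curvature $\ge (d-1)k$, the Bishop--Gromov inequality applies: writing $V_k(r)$ for the volume of a ball of radius $r$ in the $d$-dimensional model space of constant curvature $k$, the function $r \mapsto \operatorname{vol}(B(p,r))/V_k(r)$ is nonincreasing in $r$. Because $M$ has diameter at most $D$, each ball $B(p_i, D)$ equals all of $M$; comparing the radii $\e/2$ and $D$ thus yields
\[
\operatorname{vol}\bigl(B(p_i, \e/2)\bigr) \ \ge\ \frac{V_k(\e/2)}{V_k(D)}\, \operatorname{vol}(M).
\]
(When $k>0$ one first replaces $D$ by $\min(D, \pi/\sqrt{k})$, which is legitimate because Myers' theorem forces the diameter to be at most $\pi/\sqrt{k}$; this keeps $V_k(D)$ finite and positive.) Summing this over the disjoint balls $B(p_i, \e/2) \subseteq M$ gives $\operatorname{vol}(M) \ge m\, \tfrac{V_k(\e/2)}{V_k(D)}\, \operatorname{vol}(M)$, hence $m \le V_k(D)/V_k(\e/2)$. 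Setting $N_\e := \lfloor V_k(D)/V_k(\e/2)\rfloor$ then completes the argument, since this quantity depends only on $d$, $k$, $D$, and $\e$, and \emph{not} on the individual manifold $M$.

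The conceptual heart of the proof — and the only place where the hypotheses are genuinely used — is the volume comparison step: the lower curvature bound is exactly what prevents small balls from having arbitrarily tiny volume relative to the whole manifold, which is what makes the packing number uniform across the entire class of manifolds. The only technical care needed is the positive-curvature case, where one must invoke Myers' theorem to keep the comparison radius within the model sphere's diameter; beyond that, the estimate is a routine packing argument.
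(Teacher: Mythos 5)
Your proof is correct, but it takes a genuinely different route from the paper. You run Gromov's packing argument: a maximal $\e$-separated set, disjointness of the $\e/2$-balls, and Bishop--Gromov volume comparison to bound the packing number by $V_k(D)/V_k(\e/2)$. The paper instead fixes a single point $x \in M$, takes the ball $B^d_D$ of radius $D$ in the model space of constant curvature $\min\{k,0\}$, covers \emph{that} ball by $N_\e$ balls of diameter $\e$, and pushes this cover forward to $M$ through the map $\exp_x \circ\, \widehat{\exp}^{-1}$, which is non-expansive by Toponogov's theorem; surjectivity of $\exp_x$ on the radius-$D$ ball (Hopf--Rinow, using the diameter bound) makes the images cover $M$. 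The trade-offs: your argument needs only a \emph{Ricci} lower bound (since Bishop--Gromov does), so it actually proves a stronger statement than the lemma as used, and it produces the explicit constant $V_k(D)/V_k(\e/2)$; the paper's argument is measure-free, reducing everything to one covering of one model ball plus a metric comparison, but it genuinely uses the sectional curvature bound through Toponogov. Two small points of hygiene in your write-up, neither a real gap: to cancel $\operatorname{vol}(M)$ from both sides you should note $0 < \operatorname{vol}(M) < \infty$ (positivity is clear, finiteness follows from Bishop--Gromov itself), and $B(p_i,D) = M$ should be read with closed balls (or radius $D+\delta$), since a point at distance exactly $D$ may escape the open ball; the boundary sphere has measure zero, so the volume identity you need still holds. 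Like the paper, you are implicitly assuming completeness, which is harmless in the context where the lemma is applied (compact space forms).
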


\begin{proof}
Let $X$ be a Riemannian manifold satisfying the assumptions. Let $x$ be a point of $X$. 
Let $B^d_D$ be a ball of radius $D$ in the $d$-dimensional space 
of constant curvature $m = \min \{ k,0 \}$. The ball $B^d_D$ has a cover with 
$N_\e$ balls of diameter $\e$.
Consider the map $ \exp_x \widehat{\exp}^{-1}$, where $\widehat{\exp}$ is 
the exponential map in $B^d_D$ with respect to the center and $\exp_x$ 
is the exponential map in $X$ with respect to $x$, cf.\ \cite[Ch.~1,~Sec.~2]{CheegerComp}.
By the assumption, $X$ has curvature bounded below by $k$. By Toponogov's Theorem, for all $a,b\subset B^d_D$, we have \[|(\exp_x \widehat{\exp}^{-1})(a) (\exp_x\widehat{\exp}^{-1})(b)|\leq |ab|;\] in other words, $\exp_x \widehat{\exp}^{-1}$ is a non-expansive map.
Thus, the images of the $N_\e$ balls that cover $B^d_D$ are contained 
in $N_\e$ balls of radius at most $\e$.
\end{proof}

We are ready for the proof of the main theorem.

\begin{theorem} \label{thm:DiscreteCheeger}
For fixed $k, D, d$ and $V$, in terms of the number $N$ of facets, there are only exponentially many intrinsic simplicial complexes whose underlying spaces are $d$-dimensional space forms of curvature bounded below by $k$, of diameter $\le D$ and of volume $\ge V$. 
\end{theorem}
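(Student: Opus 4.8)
The plan is to carry out the strategy announced in the introduction: use Cheeger's injectivity-radius bound to chop $M$ into a \emph{bounded} number of small convex pieces, turn each piece into an endo-collapsible ball after two derived subdivisions exactly as in Proposition~\ref{prp:sphereendo}, and then assemble the local collapses into a single discrete Morse function on $\sd^2 C$ whose number of critical faces is bounded by a constant depending only on $k,D,d,V$. The exponential count will then follow from the discrete-Morse strengthening of Theorem~\ref{thm:ENDOexpbound} proved in \cite{Benedetti-DMT4MWB}, together with Lemma~\ref{lem:bayer}.

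First I would fix the geometry. Let $M$ be a $d$-dimensional space form with curvature $\ge k$, diameter $\le D$ and volume $\ge V$. By the Corollary following Cheeger's theorem, the injectivity radius of $M$ is at least some $\mathcal{C}=\mathcal{C}(k,D,V)>0$; moreover, in constant curvature the convexity radius is bounded below by a positive constant $\rho(k,D,V)$ controlled by $\mathcal{C}$ and the curvature (the volume and diameter bounds prevent the curvature from being large and positive). Choose $\e<\rho(k,D,V)$ so that every metric ball of radius $\e$ in $M$ is geodesically convex. By Lemma~\ref{lem:finitecover} there is an integer $N_\e$, depending only on $k,D,d$, such that $M$ is covered by balls $B_1,\dots,B_m$ of radius $\e$ with $m\le N_\e$. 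The point is that $m$ is bounded \emph{independently} of the triangulation and of the particular $M$; each $B_i$ is convex, and so is every intersection $B_{i_1}\cap\cdots\cap B_{i_j}$.

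Next I would pass to combinatorics. Let $C$ be a geometric triangulation of $M$ with $N$ facets. Since $B_i$ has radius below the injectivity radius, it is isometric to a convex ball in the model space $\R^d$, $S^d$, or $\mathbb{H}^d$, and $\RS(\sd^2 C,B_i)$ is realized there as a geometric subcomplex. By the reasoning of Proposition~\ref{prp:sphereendo} — Theorem~\ref{thm:ConvexEndo2} (with its Euclidean counterpart Theorem~\ref{thm:ConvexEndo}) shows that the derived neighborhood $N^1(\RS(C,B_i),C)$ is collapsible, whence by Lemma~\ref{lem:endcollapse} the piece becomes endo-collapsible after one further derived subdivision. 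Ordering the balls and collapsing $\sd^2 C$ one piece at a time, collapsing the derived neighborhood of $\RS(\sd^2 C,B_{j+1})$ onto the part already collapsed, produces a discrete Morse function on $\sd^2 C$. Because every piece and every overlap is convex, hence contractible, each newly added piece contributes only a bounded number of critical faces, so the total number of critical faces is bounded by a function $g(m,d)$, and hence by a constant depending only on $k,D,d,V$. Finally I would invoke the result of \cite{Benedetti-DMT4MWB}: for fixed $d$ and fixed $c$, the triangulated $d$-manifolds with $N'$ facets admitting a discrete Morse function with at most $c$ critical faces number at most $2^{d^2 N'}\cdot\mathrm{poly}(N')$, where the degree of the polynomial depends only on $c$ — exponential in $N'$ for bounded $c$. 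Applying this to $\sd^2 C$, which has $((d+1)!)^2 N$ facets and at most $g(m,d)$ critical faces, bounds the number of combinatorial types of $\sd^2 C$ exponentially in $N$; by Lemma~\ref{lem:bayer}, applied twice, the type of $\sd^2 C$ determines that of $C$, so the geometric triangulations $C$ are exponentially many as well.

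The main obstacle is the assembly step. One must make precise how the local endo-collapses of the convex pieces glue into one global discrete Morse function on $\sd^2 C$ and, crucially, show that the total number of critical faces stays bounded by a constant depending only on the cover rather than on $N$. This requires tracking the derived neighborhoods $N^1(\RS(\sd^2 C,B_i),C)$ and their mutual intersections, adapting Theorem~\ref{thm:ConvexEndo2} from hemispheres to the small convex balls $B_i$ (which are intersections of halfspaces), and handling the fact that the given triangulation is not adapted to the geometric cover. The essential input is that the good-cover/nerve structure of the convex balls with convex overlaps caps the critical-face count by the combinatorics of the cover; everything else is bookkeeping with the constants $\mathcal{C}(k,D,V)$, $\rho(k,D,V)$ and $N_\e$.
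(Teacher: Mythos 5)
Your Parts (cover and local structure) match the paper: the paper also uses Cheeger--Klingenberg plus Lemma~\ref{lem:finitecover} to produce a cover by $s=s(k,D,V)$ convex balls $B_i$, and also shows that each derived neighborhood $T'_i = N(\RS$-type restriction$, \sd T)$ is collapsible by Theorem~\ref{thm:ConvexEndo2}(A), hence endo-collapsible after further derived subdivisions via Proposition~\ref{prp:sphereendo} and Lemma~\ref{lem:endcollapse}. But your final step --- assembling the local collapses into one global discrete Morse function on $\sd^2 C$ with a number of critical faces bounded by a constant $g(m,d)$ --- is a genuine gap, and the justification you offer does not close it. ``Every piece and every overlap is convex, hence contractible'' proves nothing about discrete Morse vectors: contractibility does not bound the number of critical faces (this is exactly the phenomenon the paper is fighting; it even recalls that many spheres remain non-endo-collapsible after two barycentric subdivisions). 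Concretely, in your piece-by-piece collapse you must collapse the $(j{+}1)$-st piece onto its intersection with the union $P_1\cup\dots\cup P_j$ of the pieces already processed. That intersection is a union of convex sets, which is in general neither convex nor star-shaped, so neither Theorem~\ref{thm:ConvexEndo} nor Theorem~\ref{thm:ConvexEndo2} (stated for hemispheres) gives you the required collapse, and no result quoted in the paper does. Making this work would need new machinery (a nerve-type collapsibility statement for unions of convex sets with a critical-cell bound depending only on the nerve), which you gesture at but do not supply.

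The paper's actual proof avoids constructing any global Morse function, and you could repair your argument by switching to its counting scheme. In Part (III) of the paper, the triangulation is reconstructed from (i) the combinatorial type of each piece $T'_i$ --- at most $e^{\kappa N}$ types each, by applying Theorem~\ref{thm:ENDOexpbound} to the endo-collapsible complex $\sd^3 T'_i$ and then Lemma~\ref{lem:bayer} --- and (ii) the gluing data: since each pairwise intersection $T'_i\cap T'_j$ is a disk with connected dual graph, its position inside $T'_i$ and $T'_j$ is pinned down by the location and orientation of a single facet, giving at most $((d+1)!)^{4}N^2$ choices per pair. The total,
\[
e^{\kappa s N}\,\bigl(((d+1)!)^2\,N\bigr)^{s(s-1)},
\]
is exponential in $N$ because $s$ depends only on $(k,D,V,d)$. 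In other words: instead of gluing Morse functions (hard, and unproven in your proposal), the paper multiplies the counts of the pieces by the count of the ways to glue them, which requires only the local endo-collapsibility you already established.
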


\begin{proof}
Our proof has three parts:
\begin{compactenum}[\bf (I)]
\item we cover a space form $X$ satisfying the constraints above with convex open balls;
\item we count the number of geometric triangulations restricted to each ball; 
\item we assemble the triangulated balls together, thus estimating the number of triangulations of $X$.
\end{compactenum}

\begin{figure}[htb]
	\centering
  \includegraphics[width=.34\linewidth]{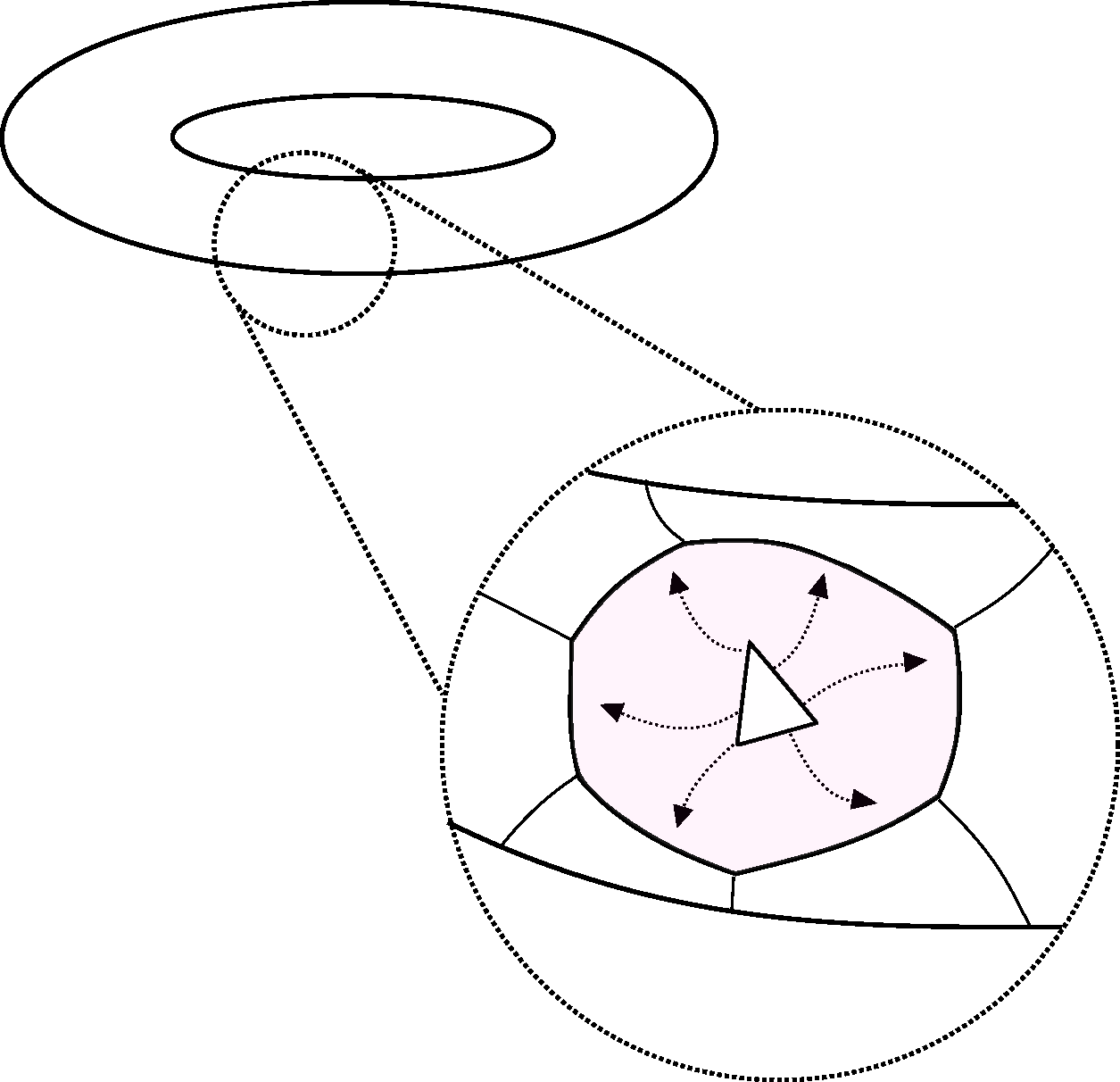}
 	\caption{\footnotesize The restriction of the triangulation to each of the $B_i$ ($i\in S$) is endo-collapsible. By counting the number of ways in which two of the $B_i$'s can be glued to one another, we determine an upper bound on the number of triangulations.}
	\label{fig:cheeger}
\end{figure}

\noindent \textbf{Part (I)}. Let $X$ be a space form of dimension $d$ satisfying the Cheeger constraints $(k,D,V)$.
By Lemma~\ref{lem:finitecover}, there exists a set $S$ of $s = s(k,D,V)$ points in such that every point in $X$ lies in distance less than $\e:=\frac{\mathcal{C}(k,D,V)}{4}$ of $S$. With this choice, any ball of radius $\e$ in $X$ is isometric to a convex ball of radius $\e$ in the unique simply-connected space form of curvature equal to the curvature of $X$. Let $T$ be a triangulation of $X$ into $N$ simplices. Let $(B_i)_{i \in \{1,\cdots,s \}}$ be the family of open convex balls with radius $\e$, centered at the points of $S$.

\smallskip

\noindent \textbf{Part (II)}. For any subset $A\subset X$, let $V_A$ denote the vertices of $\sd T$  corresponding to faces of $T$ intersecting~$A$. Define $T_{A}$ to be the 
subcomplex of $\sd T$ induced by $V_A$. 

Let now $B_i$ be one of the convex balls as above.  Then $T'_i:=  N(T_{B_i}, \sd T)$ is collapsible by Theorem~\ref{thm:ConvexEndo2}(A),  as the barycentric subdivision of $T$ can be realized by a geometric triangulation such that the vertices of $V_{B_i}$ lie in $B_i$. Also, for every face $\sigma$ of $T'_i$, $\mathrm{sd}^2 \Lk (\sigma,T'_i) $ is endo-collapsible by Proposition~\ref{prp:sphereendo}. Thus, by  Lemma~\ref{lem:endcollapse}, $\mathrm{sd}^3  T'_i$ is endo-collapsible. Hence, Theorem~\ref{thm:ENDOexpbound} provides a constant $\kappa$ such that the number of combinatorial types of $T'_i$ is bounded above by $e^{\kappa N}$.

\smallskip

\noindent \textbf{Part (III)}. 
The triangulation $\sd^2 T$ of $X$ is completely determined by
\begin{compactenum}[(i)]
\item the triangulation of each $T'_i$,
\item the triangulation $T'_i\cap T'_j$ and its position in $T'_i$ and $T'_j$. (This means we have to specify which of the faces of $T'_i$ are faces of $T'_i\cap T'_j$, too.) 
\end{compactenum}
As we saw in Part II, we have $e^{\kappa N}$ choices for triangulating each $T'_i$. Since $T'_i\cap T'_j$ is a disk, it has connected dual graph; hence, if we specify the location of one facet $\Delta$ of $T'_i\cap T'_j$ in $T'_j$ and $T'_i$ (including its orientation), this suffices to determine the position of $T'_i\cap T'_j$ in $T'_i$ and $T'_j$. For this, we have at most ${(d+1)!}^{4} N^2$ possibilities. Thus the number of geometric triangulations $T$ of $d$-dimensional space forms with $N$ facets, diameter $\le D$, volume $\ge v$, and curvature bounded below by $k$ is bounded above by
\[
{e}^{\kappa sN} \, 
{\, \left((d+1)!^2 \, N \, \right)}^{s(s-1)}. \qedhere
\]
\end{proof}
\vskip-2mm
\enlargethispage{2mm}
{\small

\subsection*{Acknowledgments}
Karim~Adiprasito acknowledges support by a Minerva fellowship of the Max Planck Society, an NSF Grant DMS 1128155, an ISF Grant 1050/16 and ERC StG 716424 - CASe.
Bruno~Benedetti acknowledges support by an NSF Grant 1600741, the DFG Collaborative Research Center TRR109, and the Swedish Research Council VR 2011-980. 
Part of this work was supported by NSF under grant No. DMS-1440140, while the authors were in residence at the Mathematical Sciences Research Institute in Berkeley, California, in Fall 2017. Both authors are thankful to the anonymous referees and to G\"unter Ziegler for corrections, suggestions, and improvements.
}

{\small

}

\end{document}